\documentclass[journal]{IEEEtran}
\hyphenation{op-tical net-works semi-conduc-tor in-clu-sion to-mog-ra-phy mono-tonic-ity mea-sure-ment mea-sure-ments%
nu-mer-i-cally guar-an-tee guar-an-tees con-duc-tive con-duc-tiv-ity fun-da-men-tal sce-nario sce-nar-ios dis-tri-bu-tion%
dis-tri-bu-tions im-ped-ance im-ped-ances the-o-rem cor-re-spond-ing num-bered}
\usepackage{tikz}

\usepackage{dsfont}
\usepackage{amsmath}
\usepackage{amsthm}
\usepackage{amssymb}
\usepackage{tikz}
\usepackage{subfig}
\usepackage{cite}
\usepackage{hhline}

\usepackage{hyperref}

\newtheorem{theorem}{Theorem}
\newtheorem{remark}[theorem]{Remark}

\newtheorem{definition}[theorem]{Definition}
\newtheorem{algorithm}[theorem]{Algorithm}

\usepackage{caption}

\newcommand{\marking}{0}
\ifnum \marking=1
\definecolor{myMarkingColor}{HTML}{2160ad}
\newcommand{\myMarkingColor}{myMarkingColor}
\newcommand{\myMarkingColorDark}{myMarkingColor!50!black}
\fi
\ifnum \marking=0
\definecolor{myMarkingColor}{rgb}{0,0,0}
\newcommand{\myMarkingColor}{black!60!white}
\newcommand{\myMarkingColorDark}{black}
\fi

\DeclareCaptionFont{myMarkingColor}{\color{myMarkingColor}}

\begin{document}

\title{Resolution Guarantees in Electrical Impedance Tomography}

\author{Bastian~Harrach,
        Marcel~Ullrich% <-this % stops a space
\thanks{B. Harrach (birth name: Bastian Gebauer) is with the Department of Mathematics, University of Stuttgart, Germany e-mail: bastian.harrach@math.uni-stuttgart.de}% <-this % stops a space
\thanks{M. Ullrich is with the Department of Mathematics, University of Stuttgart, Germany e-mail: marcel.ullrich@mathematik.uni-stuttgart.de}}

\IEEEaftertitletext{\vspace*{-5.5cm}%
\fbox{\centering \begin{minipage}{17.5cm}\centering
This is the author's version of an article that has been published in \emph{IEEE Trans. Med. Imaging} \textbf{34}(7), 1513--1521, 2015.\\
Changes were made to this version by the publisher prior to publication.\\
The final version of record is available at \url{http://dx.doi.org/10.1109/TMI.2015.2404133}
\end{minipage}}\vspace{4.5cm}%
}

\maketitle

\begin{abstract}
Electrical impedance tomography (EIT) uses current-voltage measurements on the surface of an imaging subject to
detect conductivity changes or anomalies. EIT is a promising new technique with great potential
in medical imaging and non-destructive testing. However, in many applications, EIT suffers from inconsistent reliability due to its enormous sensitivity to modeling and measurement errors. 

In this work we show that rigorous resolution guarantees are possible within a realistic EIT measurement setting including systematic and random errors. We derive a constructive criterion to decide whether a desired resolution can be achieved in a given measurement setup. Our result covers the detection of anomalies of a known minimal contrast
using noisy measurements on a number of electrodes attached to a subject with imprecisely known background conductivity.
\end{abstract}

\begin{IEEEkeywords}
Electrical impedance tomography (EIT), anomaly detection, inclusion detection, complete electrode model, resolution guarantee, monotonicity method.
\end{IEEEkeywords}

\section{Introduction}

% % {\color{red}[11]\IEEEPARstart{E}{lectrical} impedance tomography (EIT) uses current-voltage measurements on the surface 
% % of an imaging subject to detect anomalies (aka inclusions)
% % where the conductivity significantly differs from an expected background value.}
{\color{myMarkingColor}\IEEEPARstart{E}{lectrical} impedance tomography (EIT) is an imaging technique that uses current-voltage measurements on
the surface of a conductive subject to image its inner conductivity distribution. From this conductivity image, one can extract information about
the physiological composition of the subject. An upcoming application of EIT is lung monitoring. Since an inflated lung has a lower specific
conductivity than surrounding body tissues, this leads to a visible contrast in the EIT image.
Another promising application which we will focus on in this work, is the detection of anomalies (aka inclusions) where the conductivity significantly differs from an expected background value.
There are several relevant practical scenarios, e.g.\ the detection of tumors or hemorrhages in surrounding homogeneous tissue that has a certain conductivity
contrast.
% Also, there are several industrial applications, e.g. the detection of material faults in building material. 

For a further overview of practical applications of EIT appearing in the fields of medical imaging and material testing of industrial or building materials, cf. e.g., 
% Marcel, bitte zitieren (in chronologischer Reihenfolge):
%
% Andy Adler, Romina Gaburro, William Lionheart, EIT, chapter in Handbook of Mathematical Methods in Imaging, O Scherzer, Springer-Verlag, 2011. 
% J.C. Newell, D.G. Gisser and D. Isaacson, An electric current tomograph, IEEE Trans. Biomed. Eng. 35 (1988), pp. 828-33.
% P. Metherall, D.C. Barber, R.H. Smallwood and B.H. Brown, Three dimensional electrical impedance tomography, Nature 380 (1996), pp. 509-12.
% D. Holder, Electrical impedance tomography: Methods, history and applications, Institute of Physics Publishing, Bristol, UK, D 2004.
% A. Wexler, B. Fry and M.R. Neuman, Impedance-computed tomography algorithm and system, Appl. Opt. 24 (1985), pp. 3985-92.
% M. Cheney, D. Isaacson and J.C. Newell, AElectrical impedance tomography, SIM Rev. 41 (1999), pp. 85-101.
% D.C. Barber and B.H. Brown, Applied potential tomography, J. Phys. E: Sci. Instrum. 17 (1984), pp. 723-34.
% S. Grimnes and O. G. Martinsen, Bioimpedance and Bioelectricity Basics, 2nd. ed. Oxford, UK: Academic Press, 2008
% L. Borcea, Electrical impedance tomography, Inverse Problems, 18 (2002), pp. R99R136.
% L. Borcea, Addendum to Electrical impedance tomography, Inverse Problems, 19 (2003), pp. 997998
% R. H. Bayford, Bioimpedance tomography (electrical impedance tomography), Ann. Rev. Biomed. Engrg., 8 (2006), pp. 6391
% W. R. B. Lionheart, EIT reconstruction algorithms: Pitfalls, challenges and recent developments, Physiol. Meas., 25 (2004), pp. 125142.
\cite{barber1984applied,wexler1985impedance,newell1988electric,metherall1996three,cheney1999electrical,borcea2002electrical,borcea2003addendum,lionheart2003eit,holder2004electrical,bayford2006bioimpedance,choi2007reconstruction,halter2008broadband,moura2010dynamic,adler2011electrical,martinsen2011bioimpedance}.
}

The reconstruction process in EIT suffers from the fundamental ill-posedness of the underlying mathematical inverse problem which leads to an enormous 
sensitivity to modeling and measurement errors. Due to these inherent instability issues, 
high resolution EIT imaging remains an extremely challenging topic.
{\color{myMarkingColor}However, several applications would already greatly benefit from low resolution EIT images, e.g. in the field of the aforementioned tumor or hemorrhage detection. For these applications, fast and low-cost monitoring techniques have to be developed
in order to decide which patients should undergo more extensive diagnosis.
For this task, the main concern seems to be the reliability of EIT images.}

The goal of this work is to show that rigorous resolution guarantees are possible within a realistic EIT measurement setting including systematic and random errors. 
Consider a measurement setting as in figure \ref{fig:resolution_numerics} where voltage-current measurements are taken on a number of electrodes attached to the boundary of an imaging domain $\Omega$. The aim is to detect whether the domain contains one or several anomalies where the conductivity differs from some normal background range.

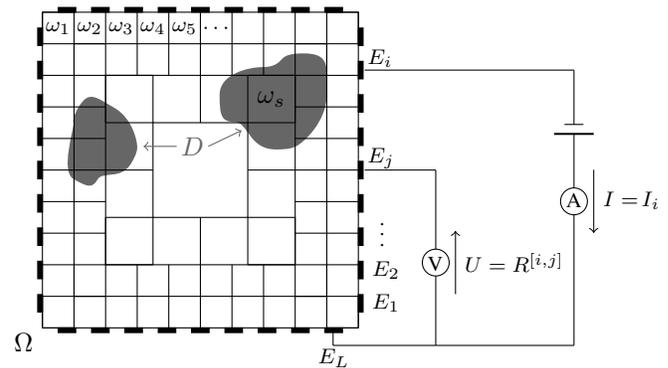
\begin{figure}[!t]%[!ht]
\begin{center}
\begin{tikzpicture}[scale=2.1]

\newcommand{\myL}{36};
\newcommand{\myLdiv}{9};
\newcommand{\myp}{0.5};

\newcommand{\myb}{((8*\myp)/\myL)};
\newcommand{\myh}{((2*(1-\myp))/(\myL/4+1))};

\begin{scope}

\fill[color=black!60!white][scale=1/11.5, shift={(-0.5,5.5)}]
(1.9,0)
.. controls (1.9,-1.5) and (2,-2) .. (3,-2)
.. controls (4,-2) and (4,-4) .. (6,-4)
.. controls (7,-4) and (7.25,-3.875) .. (7.5,-3.75)
.. controls (8,-3.5) and (9.7,-2) .. (9.7,0)
.. controls (9.7,2) and (10,3) .. (8,3)
.. controls (7.25,3) and (6.75,2.9) .. (6.5,2.65)
.. controls (6.25,2.4) and (6,2) .. (5,2)
.. controls (4,2) and (1.9,1.5) .. (1.9,0);
\end{scope}

\begin{scope}
%\fill[color=black!60!white][scale=1/15, shift={(-13,-2.5)}, rotate=60]
\fill[color=\myMarkingColor][scale=1/15, shift={(-13,-2.5)}, rotate=60]
(1.9,0)
.. controls (1.9,-1.5) and (2,-2) .. (3,-2)
.. controls (4,-2) and (4,-4) .. (6,-4)
.. controls (7,-4) and (7.25,-3.875) .. (7.5,-3.75)
.. controls (8,-3.5) and (9.7,-2) .. (9.7,0)
.. controls (9.7,2) and (10,3) .. (8,3)
.. controls (7.25,3) and (6.75,2.9) .. (6.5,2.65)
.. controls (6.25,2.4) and (1.9,1.5) .. (1.9,0)
;
\end{scope}

\foreach \x in {1,2,...,\myLdiv}{
\draw (1,{-1+\myh+(\x-1)*(\myb+\myh)}) rectangle (1.03,{-1+\myh+(\x-1)*(\myb+\myh)+\myb});
\fill (1,{-1+\myh+(\x-1)*(\myb+\myh)}) rectangle (1.03,{-1+\myh+(\x-1)*(\myb+\myh)+\myb});}
\draw [rotate=-90] (1.025+0.05,{-1+\myh+(9-1)*(\myb+\myh)+\myb/2}) node [anchor=north] {\footnotesize $E_L$};

\begin{scope}[shift={(0.09,0)}]
\draw (1.5-0.1,{-1+\myh+(-0.5-1)*(\myb+\myh)+\myb/2+0.05}) -- (1.5-0.75,{-1+\myh+(-0.5-1)*(\myb+\myh)+\myb/2+0.05});
\draw (1.5-0.75,{-1+\myh+(-0.5-1)*(\myb+\myh)+\myb/2+0.05}) -- (1.5-0.75,-1);
\draw (1.5-0.1,{-1+\myh+(5-1)*(\myb+\myh)+\myb/2}) -- (1.5-0.1,{-1+\myh+(-0.5-1)*(\myb+\myh)+\myb/2+0.05});%[->]
\draw (1-0.05,{-1+\myh+(5-1)*(\myb+\myh)+\myb/2}) -- (1.5-0.1,{-1+\myh+(5-1)*(\myb+\myh)+\myb/2});

\draw (1.6-0.075,-1+2*1.12/9+1*0.11+0.055) node [anchor=west] {\footnotesize $U=R^{[i,j]}$};
\filldraw [fill=white] (1.5-0.1,-1+2*1.12/9+1*0.11+0.055) circle (0.085);
\draw (1.6-0.095,-1+2*1.12/9+1*0.11+0.045) node [anchor=east] {\footnotesize V};
\draw [->] (1.6-0.075,-1+2*1.12/9+1*0.11+0.055-0.2) -- (1.6-0.075,-1+2*1.12/9+1*0.11+0.055+0.2);

\draw (1-0.05,{-1+\myh+(8-1)*(\myb+\myh)+\myb/2}) -- (1.5+0.775,{-1+\myh+(8-1)*(\myb+\myh)+\myb/2});%[<-]
\draw (1.5+0.775,{-1+\myh+(8-1)*(\myb+\myh)+\myb/2}) -- (1.5+0.775,{-1+\myh+(-0.5-1)*(\myb+\myh)+\myb/2+0.05+1.4});
\draw (1.5+0.775-0.06,{-1+\myh+(-0.5-1)*(\myb+\myh)+\myb/2+0.05+1.4}) -- (1.5+0.775+0.06,{-1+\myh+(-0.5-1)*(\myb+\myh)+\myb/2+0.05+1.4});
\draw [line width=0.75pt](1.5+0.775-0.125,{-1+\myh+(-0.5-1)*(\myb+\myh)+\myb/2+0.05+1.34}) -- (1.5+0.775+0.125,{-1+\myh+(-0.5-1)*(\myb+\myh)+\myb/2+0.05+1.34});
\draw (1.5+0.775,{-1+\myh+(-0.5-1)*(\myb+\myh)+\myb/2+0.05+1.34}) -- (1.5+0.775,{-1+\myh+(-0.5-1)*(\myb+\myh)+\myb/2+0.05});
\draw (1.5-0.1,{-1+\myh+(-0.5-1)*(\myb+\myh)+\myb/2+0.05}) -- (1.5+0.775,{-1+\myh+(-0.5-1)*(\myb+\myh)+\myb/2+0.05});

\draw (1.5+0.4+0.375+0.125,-1+6*1.12/9+5*0.11-0.055+0.1-0.535) node [anchor=west] {\footnotesize $I=${\color{myMarkingColor}\,$I_i$}};
\filldraw [fill=white] (1.5+0.4+0.375,-1+6*1.12/9+5*0.11-0.055+0.11-0.55) circle (0.085);
\draw (1.6+0.423+0.36,-1+6*1.12/9+5*0.11-0.055+0.1-0.535) node [anchor=east] {\footnotesize A};
\draw [<-] (1.5+0.4+0.375+0.125,-1+6*1.12/9+5*0.11-0.055+0.11-0.55-0.2) -- (1.5+0.4+0.375+0.125,-1+6*1.12/9+5*0.11-0.055+0.11-0.55+0.2);
\end{scope}

\foreach \x in {1,2}{
\draw              (1.03,{-1+\myh+(\x-1)*(\myb+\myh)+\myb/2}) node [anchor=west] {\footnotesize $E_\x$};}
\foreach \x in {3.3}{
\draw              (1.075,{-1+\myh+(\x-1)*(\myb+\myh)+\myb/2}) node [anchor=west] {\footnotesize $\vdots$};}
\foreach \x in {5}{
\draw              (1,{-1+\myh+(\x-1)*(\myb+\myh)+\myb/2+0.075}) node [anchor=west] {\footnotesize $E_j$};}
\foreach \x in {8}{
\draw              (1,{-1+\myh+(\x-1)*(\myb+\myh)+\myb/2+0.075}) node [anchor=west] {\footnotesize $E_{i}$};}

\foreach \x in {1,2,...,\myLdiv}{
\draw [rotate=90]  (1,{-1+\myh+(\x-1)*(\myb+\myh)}) rectangle (1.03,{-1+\myh+(\x-1)*(\myb+\myh)+\myb});
\fill [rotate=90]  (1,{-1+\myh+(\x-1)*(\myb+\myh)}) rectangle (1.03,{-1+\myh+(\x-1)*(\myb+\myh)+\myb});
}
\foreach \x in {1,2,...,\myLdiv}{
\draw [rotate=180] (1,{-1+\myh+(\x-1)*(\myb+\myh)}) rectangle (1.03,{-1+\myh+(\x-1)*(\myb+\myh)+\myb});
\fill [rotate=180] (1,{-1+\myh+(\x-1)*(\myb+\myh)}) rectangle (1.03,{-1+\myh+(\x-1)*(\myb+\myh)+\myb});
}
\foreach \x in {1,2,...,\myLdiv}{
\draw [rotate=-90] (1,{-1+\myh+(\x-1)*(\myb+\myh)}) rectangle (1.03,{-1+\myh+(\x-1)*(\myb+\myh)+\myb});
\fill [rotate=-90] (1,{-1+\myh+(\x-1)*(\myb+\myh)}) rectangle (1.03,{-1+\myh+(\x-1)*(\myb+\myh)+\myb});
}

\draw (-1,-1) rectangle (1,1);
\draw (-0.8,-0.8) rectangle (0.8,0.8);
\draw (-0.6,-0.6) rectangle (0.6,0.6);
\draw (-0.3,-0.3) rectangle (0.3,0.3);

\foreach \x in {-1,-0.8,...,1}{
\draw (-1,\x) -- (-0.8,\x);
\draw (1,\x) -- (0.8,\x);
\draw (\x,-1) -- (\x,-0.8);
\draw (\x,1) -- (\x,0.8);}

\foreach \x in {1,2,...,5}{
\draw (-1.1+0.2*\x,0.9) node {\footnotesize $\omega_\x$};}
\draw (-1.09+0.2*6,0.9) node {\footnotesize $\cdots$};
\draw (0.45,0.45) node {$\omega_{{\color{\myMarkingColorDark}s}}$};

\foreach \x in {-0.8,-0.6,...,0.8}{
\draw (-0.8,\x) -- (-0.6,\x);
\draw (0.8,\x) -- (0.6,\x);
\draw (\x,-0.8) -- (\x,-0.6);
\draw (\x,0.8) -- (\x,0.6);}

\foreach \x in {-0.6,-0.3,...,0.6}{
\draw (-0.6,\x) -- (-0.3,\x);
\draw (0.6,\x) -- (0.3,\x);
\draw (\x,-0.6) -- (\x,-0.3);
\draw (\x,0.6) -- (\x,0.3);}

%\draw[color=black!60!white] (-0.05,0.15) node {$D$};
%\draw[color=black!60!white][->] (0.05,0.15) -- (0.26,0.25);
%\draw[color=black!60!white][->] (-0.15,0.15) -- (-0.36,0.15);
\draw[color=\myMarkingColor] (-0.05,0.15) node {$D$};
\draw[color=\myMarkingColor][->] (0.05,0.15) -- (0.26,0.25);
\draw[color=\myMarkingColor][->] (-0.15,0.15) -- (-0.36,0.15);
\draw (-1,-0.975) node [anchor=north east] {$\Omega$};

\end{tikzpicture}
\caption{\color{myMarkingColor}Measurement setting with inclusions $D$ occupying a subset of a subject
$\Omega$ that is decomposed into a partition of subsets $\omega_1,\omega_2,\ldots\subseteq\Omega$.
Driving a current %of $1$mA
through the $i$-th and the $L$-th electrode, we measure the corresponding voltage $R^{[i,j]}$ (in mV per applied mA) between the $j$-th and the $L$-th electrode. Repeating 
this for all $i$ and $j$ we obtain the measurement matrix $R=(R^{[i,j]})_{i,j=1,\ldots, L-1}\in \mathbb{R}^{(L-1)\times (L-1)}$.}
\label{fig:resolution_numerics}
\end{center}
\end{figure}

We describe a desired \emph{resolution} by a partition of $\Omega$ into disjoint subsets $\omega_1,\omega_2,\ldots\subseteq\Omega$. 
%Given a measurement setup and bounds on the measurement and modelling errors, 
We say that a \emph{resolution guarantee} holds if the measured data contains enough information to 
\begin{enumerate}
\item[(a)] correctly mark every element $\omega_{{\color{myMarkingColor}s}}$ that is completely covered by an anomaly{\color{myMarkingColor},}
\item[(b)] correctly mark no element, if there is no anomaly {\color{myMarkingColor}at all.}
\end{enumerate}
In other words, a resolution guarantee ensures that no false positives are detected in the anomaly-free case, 
and no false negatives are detected in the case of inclusions over a certain size.
{\color{myMarkingColor}Let us stress that in this work we aim to characterize the resolution 
up to which an anomaly can be detected. Assumptions (a) and (b) do not guarantee that the shape of a 
detected anomaly can be correctly determined up to a certain resolution. In that sense, the subject of this work might be called a (resolution-based) detection guarantee.}

Whether a certain desired resolution can be guaranteed will depend on a number of facts, including the number and position 
of electrodes, the measurement pattern, the inclusion contrast, and modeling and measurement errors.
The aim of this work is to show that resolution guarantees are possible in realistic settings, and to derive a criterion to evaluate whether a desired resolution can be guaranteed. 
We also describe a simple reconstruction algorithm that implements (a) and (b) above.

Let us comment on the vast literature on identfiability in EIT. In the last decades, great theoretical progress has been made on
the question whether two arbitrary conductivities can be distinguished by idealized noise-free and continuous measurements (the \emph{Calder\'on-Problem}
\cite{calderon1980inverse,calderon2006inverse})
% A. P. CalderŽon, On an inverse boundary value problem, in Seminar on Numerical Analysis and its Application to Continuum Physics (eds. W. H. Meyer and M. A. Raupp), Math. Soc., Rio de Janeiro, Brasil, (1980), 6573.
% A. P. CalderŽon, On an inverse boundary value problem, Comput. Appl. Math., 25 (2006), 133138.
. We refer to the seminal works
\cite{kohn1984determining,kohn1985determining,nachman1996global,astala2006calderon}
% R. Kohn and M. Vogelius, Determining conductivity by boundary measurements, Comm. Pure Appl. Math., 37 (1984), 289298.
% R. Kohn and M. Vogelius, Determining conductivity by boundary measurements II. interior results, Comm. Pure Appl. Math., 38 (1985), 643667.
% A. I. Nachman, Global uniqueness for a two-dimensional inverse boundary value problem, Ann. of Math., 143 (1996), 7196.
% K. Astala and L. Pšaivšarinta, Calderons inverse conductivity problem in the plane, Ann. of Math., 163 (2006), 265299.
, the overview
\cite{uhlmann2008commentary}
% 
%G. Uhlmann, Commentary on CalderŽons paper (29), on an inverse boundary value problem, in Selected papers of Alberto P. CalderŽon, Amer. Math. Soc., Providence, RI, (2008), 623636.
and the recent breakthroughs for partial boundary data 
\cite{imanuvilov2011determination,kenig2013recent}%
% CARLOS KENIG AND MIKKO SALO: RECENT PROGRESS IN THE CALDER ON PROBLEM WITH PARTIAL DATA
% Oleg Y. Imanuvilova, Gunther Uhlmann, and Masahiro Yamamoto: Determination of second-order elliptic operators in two dimensions from partial Cauchy data
. The distinguishability of conductivities from finite precision data has been studied in the works of Bates, {\color{myMarkingColor}Gen\c{c}er,} Gisser, {\color{myMarkingColor}Ider,} Isaacson, {\color{myMarkingColor}Kuzuoglu, Lionheart,} Newell, Seagar, {\color{myMarkingColor}Paulson, Pidcock} and Yeo
{\color{myMarkingColor}\cite{seagar1984full,seagar1985full,isaacson1986distinguishability,gisser1987current,gisser1990electric,paulson1993optimal,gencer1994electrical}. Also, let us refer to the works of Kolehmainen, Lassas, Nissinen, Ola and Kaipio \cite{kolehmainen2008electrical,nissinen2011compensation} regarding uncertainties in the subject's shape and
electrode's contact impedances.}

Several reconstruction methods have been proposed for anomaly or inclusion detection problems,
cf., e.g., Potthast \cite{potthast2006survey} for an overview. Arguably, the most prominent inclusion detection method is the Factorization Method (FM) of Kirsch, Br\"uhl and Hanke \cite{Kir98,Bru00,Bru01}, see
\cite{hanke2003recent,hyvonen2004complete,kirsch2005factorization,gebauer2006factorization,gebauer2007factorization,hyvonen2007numerical,nachman2007imaging,gebauer2008localized,hanke2008factorization,kirsch2008factorization,lechleiter2008factorization,hakula2009computation,harrach2009detecting,schmitt2009factorization,harrach2010factorization,adler2011electrical,schmitt2011factorization}
% Bitte zitieren [5]-[21] aus B. Harrach: Recent Progress on the Factorization Method for Electrical Impedance Tomography
for the devolopment of the FM in the field of EIT and
\cite{harrach2013recent}
% B. Harrach: Recent Progress on the Factorization Method for Electrical Impedance Tomography
for a recent overview. Notably, in the overview
\cite{harrach2013recent}
% B. Harrach: Recent Progress on the Factorization Method for Electrical Impedance Tomography
, the FM is formulated on the basis of monotonicity-based arguments, and the recent result \cite{harrach2013monotonicity} indicates that, for EIT,
the FM can be outperformed by monotonicity-based methods first formulated by Tamburrino and Rubinacci in \cite{Tamburrino02,Tamburrino06}. 

The main new idea of this work is to obtain resolution guarantees for realistic settings by treating 
worst-case scenarios with monotonicity-based ideas. To the knowledge of the authors, the results derived herein 
are the firsts to rigorously quantify the achievable resolution in the case of realistic electrode measurements in a setting
with imprecisely known background conductivity, contact impedances and measurement noise. {\color{myMarkingColor}We believe that this will be useful for designing reliable EIT systems. Our results may be used to determine whether a 
desired resolution can be achieved and to quantify the required measurement accuracy.
Moreover, our results could be the basis of optimization strategies regarding the resolution, or the number and positions of electrodes and the driving patterns.}

The paper is organized as follows. A realistically {\color{myMarkingColor}modeled} measurement setting including systematic and random errors is introduced in section \ref{sec:CEM}.
Section \ref{sec:monotonicity_based_methods} presents a monotonicity relation and motivates how this relation can be used to design inclusion detection methods. In section \ref{sec:RGs}, we introduce the concept of a rigorous resolution guarantee and show how to verify such
guarantees by a simple test. We also derive fast {\color{myMarkingColor}linearized} versions of our tests that allow faster verifications at the price of underestimating the achievable resolution. Section \ref{sec:numerical_results} presents some numerical results for the verification of resolution guarantees of section \ref{sec:RGs}.
Section \ref{sec:conclusion} contains some concluding remarks.

\section{The setting}\label{sec:CEM}

The current-voltage measurements can be modeled by the complete electrode
model (CEM) as follows (cf. \cite{Som92}). Let $\Omega\subseteq\mathbb{R}^n$ be a bounded domain with piecewise smooth boundary representing the conductive object and let $\sigma:\ \Omega\to \mathbb{R}$ be the real valued conductivity distribution inside $\Omega$. We assume that $\sigma$ and $1/\sigma$ are positive and bounded functions.

Electrodes are attached to the boundary of the object 
as in figure \ref{fig:resolution_numerics}.
Their location is denoted with $E_1,E_2,\ldots,E_L\subseteq\partial\Omega$, and their
contact impedances are denoted by a vector with positive entries
\begin{equation*}
z:=\left(z^{[1]},\ldots,z^{[L]}\right)\in\mathbb{R}^L.
\end{equation*}
{\color{myMarkingColor}The electrodes are assumed to be perfectly conductive.} 

For each $i\in\lbrace 1,2,\ldots,L-1\rbrace$, we drive a current {\color{myMarkingColor}$I_i$} with strength
$1$ {\color{myMarkingColor}$m$A} through the $i$-th electrode while keeping the $L$-th electrode grounded
and all other electrodes insulated (so that the current {\color{myMarkingColor}flows} through the grounded $L$-th electrode). Then the potential $u_i$ inside $\Omega$
and the potentials $U_i=\left(U_i^{[1]},\ldots,U_i^{[L]}\right)$ on the electrodes fulfill
\begin{equation*}\label{eq:CEM_beginning}
 \nabla\cdot\sigma\nabla u_i=0\quad \textrm{in}\ \Omega,
\end{equation*}
with boundary conditions
\setlength{\arraycolsep}{0.0em}
\begin{eqnarray*}\label{eq:CEM_ending}
 \int_{E_l}\sigma\left(\nabla u_i\right)\cdot\nu\,\mathrm{d}S&{}={}&
\delta_{l,i}-\delta_{l,L},\\
 u_i|_{E_l}+z^{[l]}\sigma\left(\nabla u_i\right)\cdot\nu |_{E_l} &{}={}&
\text{const.}=:
 U_i^{[l]}%}\textrm{ is a real constant on }E_l
\end{eqnarray*}
\setlength{\arraycolsep}{5pt}
for $l\in\lbrace 1,2,\ldots,L\rbrace$,
\begin{equation*}
  \sigma\left(\nabla u_i\right)\cdot\nu = 0\quad\textrm{on}\quad \partial\Omega\setminus\bigcup_{l=1}^L E_l,
\end{equation*}
and $U_i^{[L]}=0$. $\nu$ is the outer normal on the boundary of $\Omega$.
 
For each injected current, we measure the voltage on $E_1$,\ldots,$E_{L-1}$ 
against the grounded $L$-th electrode. We thus collect a matrix of 
measurements %$R(\sigma,z)\in\mathbb{R}^{L-1\times L-1}$
\begin{equation}\label{eq:definition_of_R}
R(\sigma,z):=\left(R^{[{\color{myMarkingColor}i,j}]}(\sigma,z)\right)_{{\color{myMarkingColor}i,j}=1}^{L-1}:=\left(U_i^{[{\color{myMarkingColor}j}]}\right)_{{\color{myMarkingColor}i,j}=1}^{L-1}\in\mathbb{R}^{\color{\myMarkingColor}(L-1)^2}.
\end{equation}
The matrix $R(\sigma,z)$ is easily shown to be symmetric.

%The operator that maps the applied current pattern to the corresponding voltage %pattern is linear and symmetric; cf. \cite{Som92}.
%Hence, for $w\in\mathbb{R}^{L-1}$, the corresponding potential to the applied %current $I=(I_1 \ldots I_{L-1})w$ is given by
%$U=(U_1 \ldots U_{L-1})w$ and $R(\sigma,z)$ is symmetric.

We consider anomaly detecting problems where we try to detect
regions (the so-called inclusions) in $\Omega$ where the conductivity differs from a normal background range. To allow for {\color{myMarkingColor}modeling} and
measurement errors in this context, we make the
following setting assumptions:

\begin{enumerate}
 \item[(a)] {\bf Conductivity distribution $\sigma(x)$:} The true conductivity distribution is given by an inclusion conductivity $\sigma_D(x)$ inside an inclusion $D$ and
by a background conductivity $\sigma_B(x)$ inside $\Omega\setminus D$, i.e.
$$\sigma(x)=\begin{cases}\sigma_D(x),&x\in D,\\\sigma_B(x),&x\in\Omega\setminus D.\end{cases}$$
 \item[(b)] {\bf Background error $\epsilon\geq0$:} The background conductivity approximately agrees with a known positive constant $\sigma_0>0$, 
 $$\left|\sigma_B(x)-\sigma_0\right|\leq\epsilon \quad \forall x\in\Omega\setminus D.$$
 \item[(c)] {\bf Inclusion conductivity contrast $c>0$:} 
We assume that we know a lower bound on the inclusion contrast, i.e.,
that we know $c>0$ with either
 \begin{enumerate}
  \item[(i)] $\sigma_D(x)-\sigma_0\geq c\quad \forall x\in D$,
  \item[(ii)] $\sigma_0-\sigma_D(x)\geq c\quad \forall x\in D$.
 \end{enumerate}
 \item[(d)] {\bf Contact impedances error $\gamma\geq0$:}
We assume that we approximately know the contact impe\-dances vector $z$,
i.e. that we know $z_0\in \mathbb{R}^L$ with
$$\left|z^{[l]}-z_0^{[l]}\right|\leq\gamma\quad \forall l\in\lbrace 1,2,\ldots,L-1\rbrace.$$
 \item[(e)] {\bf Measurement noise $\delta\geq0$:} We assume that
we can measure the voltages $R(\sigma,z)$ up to a noise level $\delta>0$, i.e., that we are given $R_\delta\in\mathbb{R}^{(L-1)\times (L-1)}$ with
$$\left\|R(\sigma,z)-R_\delta\right\|\leq\delta.$$
Possibly replacing $R_\delta$ by its symmetric
part, we can assume that $R_\delta$ is symmetric. 
%In addition, we assume $c>\epsilon$, since for the possibility of detecting %inclusions,
%the inclusion contrast $c$ has to be lager than the background error $\epsilon$.
\end{enumerate}

\section{Monotonicity}\label{sec:monotonicity_based_methods}

Our results are based on the following monotonicity relations that extend
results of Gisser, Ikehata, Isaacson, Kang, Newell, Rubinacci, 
Seo, Sheen, and Tamburrino  \cite{gisser1990electric, ikehata1998size,Kan97, Tamburrino02}.

\begin{theorem}\label{th:monotonicity_relation}
 For $i\in\lbrace 1,2\rbrace$, let $\sigma_i:\ \Omega\to \mathbb{R}$ be a conductivity distribution and $z_i\in\mathbb{R}^L$ be a contact impedances vector.
 Then
\begin{equation}\label{eq:monotony_relation}
 \sigma_1\leq\sigma_2, z_1\geq z_2 
%\Longrightarrow
\quad \text{ implies } \quad
 R\left(\sigma_1,z_1\right)\geq R\left(\sigma_2,z_2\right).
\end{equation}
The inequalities on the left side of the implication are meant pointwise.
The inequality on the ride side is to be understood in the sense of matrix definiteness.
\end{theorem}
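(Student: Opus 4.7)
The plan is to characterize the bilinear form $I^{T}R(\sigma,z)I$ through a variational (energy) principle for the Complete Electrode Model and then exploit the fact that both $\sigma$ and $1/z^{[l]}$ enter this energy monotonically.

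First I would introduce the standard weak formulation of the CEM. For an applied current pattern $I=(I_1,\dots,I_{L-1})\in\mathbb{R}^{L-1}$ (extended by $I_L=-\sum_{i=1}^{L-1} I_i$), one works on the space of pairs $(v,V)\in H^{1}(\Omega)\times\mathbb{R}^{L}$ with $V^{[L]}=0$ and considers the symmetric bilinear form
\begin{equation*}
B_{\sigma,z}((u,U),(v,V)) := \int_{\Omega}\sigma\,\nabla u\cdot\nabla v\,\mathrm{d}x \; + \; \sum_{l=1}^{L}\frac{1}{z^{[l]}}\int_{E_l}(U^{[l]}-u)(V^{[l]}-v)\,\mathrm{d}S.
\end{equation*}
An integration-by-parts argument shows that the CEM boundary conditions are the Euler--Lagrange equations of the energy $J(v,V):=\tfrac{1}{2}B_{\sigma,z}((v,V),(v,V))-\sum_{i=1}^{L-1}I_i V^{[i]}$, so the CEM solution $(u_I,U_I)$ is its unique minimizer. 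Testing the weak form against $(u_I,U_I)$ itself gives $B_{\sigma,z}((u_I,U_I),(u_I,U_I))=I^{T}U_I'=I^{T}R(\sigma,z)I$, and hence $\min J=-\tfrac{1}{2}I^{T}R(\sigma,z)I$, i.e.
\begin{equation*}
\tfrac{1}{2}\,I^{T}R(\sigma,z)\,I \; = \; \max_{(v,V)}\Big[\,\textstyle\sum_{i=1}^{L-1}I_i V^{[i]}\;-\;\tfrac{1}{2}B_{\sigma,z}((v,V),(v,V))\,\Big].
\end{equation*}

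Second I would simply observe that $\sigma_1\leq\sigma_2$ pointwise and $z_1\geq z_2$ componentwise (hence $1/z_1^{[l]}\leq 1/z_2^{[l]}$) imply the termwise inequality $B_{\sigma_1,z_1}((v,V),(v,V))\leq B_{\sigma_2,z_2}((v,V),(v,V))$ for every admissible pair $(v,V)$. Therefore the functional inside the brackets is pointwise at least as large for $(\sigma_1,z_1)$ as for $(\sigma_2,z_2)$, and the max-characterization above yields $I^{T}R(\sigma_1,z_1)I\geq I^{T}R(\sigma_2,z_2)I$ for every $I\in\mathbb{R}^{L-1}$, which is exactly the claimed matrix inequality.

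The main obstacle is really only the first step: writing down the correct CEM energy, justifying its minimization property, and identifying the minimum with $-\tfrac{1}{2}I^{T}RI$ while keeping track of the grounding condition and the current constraint $\sum_l I_l=0$. Once this variational (primal) characterization is in place, the rest of the proof is a one-line consequence of the order-preservation of pointwise maxima; no compactness, no regularity theory, and no spectral argument is needed. This makes the proof essentially identical in spirit to the classical continuum monotonicity results of Ikehata, Kang--Seo--Sheen and Gisser--Isaacson--Newell, with the contact-impedance boundary term treated on exactly the same footing as the bulk conductivity term.
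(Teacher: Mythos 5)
Your proof is correct. It is not, however, the route the paper takes for this statement: the paper proves theorem~\ref{th:monotonicity_relation} as an immediate corollary of the quantitative two\--sided estimate in theorem~\ref{th:monotonicity_inequality}, whose proof starts from the same variational identity $w^{T}V_j=B_i((v_i,V_i),(v_j,V_j))$ underlying your energy functional, but then expands $w^{T}(R(\sigma_2,z_2)-R(\sigma_1,z_1))w$ algebraically into a sum of manifestly signed terms by completing the square, rather than invoking a Dirichlet minimization principle. Your characterization $\tfrac12 I^{T}R(\sigma,z)I=\max_{(v,V)}\bigl[\ell(V)-\tfrac12 B_{\sigma,z}((v,V),(v,V))\bigr]$ is exactly the packaged form of the paper's upper bound: inserting the competitor $(v_2,V_2)$ into the minimization problem for $(\sigma_1,z_1)$ reproduces the first inequality of theorem~\ref{th:monotonicity_inequality} with the explicit remainder $\int_\Omega(\sigma_1-\sigma_2)|\nabla v_2|^2\,\mathrm{d}x+\sum_l(1/z_1^{[l]}-1/z_2^{[l]})\int_{E_l}(v_2-V_2^{[l]})^2\,\mathrm{d}s$. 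What your route buys is brevity and transparency for the purely qualitative sign statement; what the paper's route buys is the explicit integral remainders on both sides (including the weighted lower bound with factors $\sigma_2/\sigma_1$ and $z_1^{[l]}/z_2^{[l]}$), which are indispensable later for the linearized verification tests. Two points you should make explicit to close your argument: coercivity of $B_{\sigma,z}$ on the grounded space $\lbrace (v,V): V^{[L]}=0\rbrace$, so that the minimizer exists, is unique, and the max is attained (standard under the stated bounds on $\sigma$, $1/\sigma$ and $z$, cf.\ Somersalo--Cheney--Isaacson); and the identification $\ell(U_I)=I^{T}R(\sigma,z)I$, which requires the symmetry/reciprocity of $R$ only implicitly and is exactly the step $V_j=R(\sigma_j,z_j)w$ in the paper.
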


\begin{proof}
This follows from the more general theorem~\ref{th:monotonicity_inequality}
below.
\end{proof}

Theorem \ref{th:monotonicity_relation} yields monotonictiy-based inclusion detection methods, cf.~\cite{Tamburrino02}. To present the main idea, consider the simple example
where $\sigma=1+\chi_D$, where $\chi_D$ is the characteristic function on $D$,
and the contact impedances vector $z\in\mathbb{R}^L$ is known exactly.

For a small ball $B\subseteq\Omega$ we define a test conductivity
$\tau_B=1+\chi_B$. From the monotonicity relation of theorem \ref{th:monotonicity_relation} we have that
\begin{equation*}
 B\subseteq D\quad \text{implies} \quad R(\tau_B,z)\geq R(\sigma,z). 
\end{equation*}
Hence, the union of all test balls $B$ fulfilling $R(\tau_B,z)\geq R(\sigma,z)$ is an upper bound of the inclusion $D$.

In the recent work \cite{harrach2013monotonicity}, the authors showed that,
for continuous boundary data, monotonicity methods are actually capable of reconstructing the exact shape $D$ under rather general assumptions.
Moreover, \cite{harrach2013monotonicity} shows how to replace the monotonicity tests by fast {\color{myMarkingColor}linearized} versions without {\color{myMarkingColor}losing} shape information, see also \cite{harrach2010exact}.

We cannot expect exact shape reconstruction in settings 
with a finite number of electrodes and imprecisely known contact impedances and background conductivities. However, monotonicity-based arguments will allow us
to characterize the achievable resolution in such realistic settings. For this, we formulate a quantitative version of theorem \ref{th:monotonicity_relation}:

\begin{theorem}\label{th:monotonicity_inequality}
For $i\in\lbrace 1,2\rbrace$, let $\sigma_i:\ \Omega\to \mathbb{R}$ be a conductivity distribution and $z_i\in\mathbb{R}^L$ be a contact impedances vector. Given $w\in \mathbb{R}^{L-1}$, let $\left(v_i,V_i\right)$ be the
corresponding potentials resulting from driving a current of $w_j$ through the $j$-th electrode, respectively. (Note that this implies a current flux of $-\sum_{l=1}^L w_l$ through the grounded $L$-th electrode.) Then,
\setlength{\arraycolsep}{0.0em}
\begin{eqnarray*}\label{eq:monotony_inequality}
\lefteqn{
\int_\Omega\left(\sigma_1-\sigma_2\right) \left|\nabla v_2\right|^2 \mathrm{d}x
}
\\
\lefteqn{{+}\:\sum_{l=1}^L\left(\frac{1}{z_1^{[l]}}-\frac{1}{z_2^{[l]}}\right)\int_{E_l}\left(v_2-V_2^{[l]}\right)^2\,\mathrm{d}s}\\
&{}\geq{}& w^T\left(R\left(\sigma_2,z_2\right)-R\left(\sigma_1,z_1\right)\right)w\\
&{}\geq{}& \int_\Omega\frac{\sigma_2}{\sigma_1}\left(\sigma_1-\sigma_2\right)\left|\nabla v_2\right|^2\mathrm{d}x\\
&&{+}\:\sum_{l=1}^L\frac{z_1^{[l]}}{z_2^{[l]}}\left(\frac{1}{z_1^{[l]}}-\frac{1}{z_2^{[l]}}\right)\int_{E_l}\left(v_2-V_2^{[l]}\right)^2\,\mathrm{d}s.
\end{eqnarray*}
\setlength{\arraycolsep}{5pt}
\end{theorem}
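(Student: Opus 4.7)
The plan is to rewrite $w^T R(\sigma_i,z_i) w$ via the natural Dirichlet energy of the CEM, derive the upper bound from the Dirichlet minimisation principle, and derive the lower bound by applying Young's inequality to the cross-energy in an \emph{asymmetric} way, so that a copy of $w^T R(\sigma_1,z_1)w$ appears on the right-hand side and can be absorbed into the left.

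First, I would record the weak formulation of the CEM. With the symmetric bilinear form
\[
 a_{\sigma,z}\bigl((u,U),(\phi,\Phi)\bigr) := \int_\Omega \sigma\,\nabla u\cdot\nabla\phi\,\mathrm{d}x + \sum_{l=1}^L \frac{1}{z^{[l]}}\int_{E_l}(u-U^{[l]})(\phi-\Phi^{[l]})\,\mathrm{d}s
\]
defined on pairs $(u,U)$ with $U^{[L]}=0$, the CEM solution $(v_i,V_i)$ is characterised by $a_{\sigma_i,z_i}\bigl((v_i,V_i),(\phi,\Phi)\bigr) = \sum_{l=1}^{L-1} w_l \Phi^{[l]}$ for every admissible test pair; choosing $(\phi,\Phi)=(v_i,V_i)$ gives the energy identity $w^T R(\sigma_i,z_i) w = a_{\sigma_i,z_i}\bigl((v_i,V_i),(v_i,V_i)\bigr)$. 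For the upper bound, the Dirichlet principle states that $(v_1,V_1)$ minimises $J_1(u,U) := a_{\sigma_1,z_1}\bigl((u,U),(u,U)\bigr) - 2\sum_l w_l U^{[l]}$ with minimum value $-w^T R(\sigma_1,z_1)w$. Evaluating $J_1$ at $(v_2,V_2)$ and subtracting the energy identity for $(v_2,V_2)$ yields the claimed upper estimate after a short rearrangement.

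For the lower bound, I would exploit the weak formulation for $(v_2,V_2)$ tested against $(v_1,V_1)$, namely
\[
 w^T R(\sigma_1,z_1)w = w^T V_1 = \int_\Omega \sigma_2\,\nabla v_1\cdot\nabla v_2\,\mathrm{d}x + \sum_{l=1}^L \frac{1}{z_2^{[l]}}\int_{E_l}(v_1-V_1^{[l]})(v_2-V_2^{[l]})\,\mathrm{d}s,
\]
and split each product asymmetrically as $\sigma_2\,\nabla v_1\cdot\nabla v_2 = \bigl(\sqrt{\sigma_1}\,\nabla v_1\bigr)\cdot\bigl((\sigma_2/\sqrt{\sigma_1})\,\nabla v_2\bigr)$, and analogously on the electrodes using $\sqrt{z_1^{[l]}}$ factors. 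Young's inequality $a\cdot b\le\tfrac{1}{2}|a|^2+\tfrac{1}{2}|b|^2$ then produces $\tfrac{1}{2}w^T R(\sigma_1,z_1)w$ on the right, which I absorb into the left to obtain
\[
 w^T R(\sigma_1,z_1)w \le \int_\Omega \frac{\sigma_2^2}{\sigma_1}|\nabla v_2|^2\,\mathrm{d}x + \sum_{l=1}^L\frac{z_1^{[l]}}{(z_2^{[l]})^2}\int_{E_l}(v_2-V_2^{[l]})^2\,\mathrm{d}s.
\]
Subtracting this from the energy identity for $(v_2,V_2)$ and using the algebraic simplifications $\sigma_2 - \sigma_2^2/\sigma_1 = (\sigma_2/\sigma_1)(\sigma_1-\sigma_2)$ and $\tfrac{1}{z_2^{[l]}}-\tfrac{z_1^{[l]}}{(z_2^{[l]})^2} = \tfrac{z_1^{[l]}}{z_2^{[l]}}\bigl(\tfrac{1}{z_1^{[l]}}-\tfrac{1}{z_2^{[l]}}\bigr)$ recovers the lower bound of the theorem.

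The main obstacle is the lower estimate: its weights involve the ratio $\sigma_2/\sigma_1$ (and $z_1^{[l]}/z_2^{[l]}$) rather than the natural difference $\sigma_1-\sigma_2$, so the naive trick of swapping the roles of $1$ and $2$ in the upper-bound argument only produces a weaker lower bound expressed in terms of $v_1$. The asymmetric splitting above is chosen precisely so that one factor reproduces the full $(\sigma_1,z_1)$-energy of $(v_1,V_1)$ and can be absorbed; positivity of $\sigma_1$ and $z_1$ is what allows the splitting to be well defined. Modulo this, both estimates are standard consequences of the weak form and the quadratic nature of the CEM energy.
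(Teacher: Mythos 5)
Your proposal is correct and is essentially the paper's own argument in different packaging: the Dirichlet minimisation principle for the upper bound is exactly the paper's observation that $B_1\bigl((v_1,V_1)-(v_2,V_2),(v_1,V_1)-(v_2,V_2)\bigr)\ge 0$, and your asymmetric Young inequality with weights $\sqrt{\sigma_1}$ and $\sigma_2/\sqrt{\sigma_1}$ (resp.\ $1/\sqrt{z_1^{[l]}}$ and $\sqrt{z_1^{[l]}}/z_2^{[l]}$) is precisely the polarised form of the paper's completed square $\sigma_1\bigl|\nabla v_1-\tfrac{\sigma_2}{\sigma_1}\nabla v_2\bigr|^2\ge 0$. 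Both bounds and the final algebraic simplifications check out.
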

\begin{proof}
From the variational formulation of the CEM (cf., e.g., \cite{Som92}),
we obtain for $i,j\in \{1,2\}$,
\setlength{\arraycolsep}{0.0em}
\begin{eqnarray*}
w^T V_j &{}={} & \int_\Omega\sigma_i\nabla v_i\cdot\nabla v_j\mathrm{d}x \\
&&{+}\: \sum_{l=1}^L\frac{1}{z^{[l]}_i}\int_{E_l}\left(v_i-V^{[l]}_i\right)\left(v_j-V^{[l]}_j\right)\mathrm{d}s\\
&{}=:{}& B_{i}((v_i,V_i),(v_j,V_j)).
\end{eqnarray*}
\setlength{\arraycolsep}{5pt}
and, by linearity, we have that
\[
V_j=R(\sigma_j,z_j) w, \quad j\in \{1,2\}.
\]

Hence, it holds that
\setlength{\arraycolsep}{0.0em}
\begin{eqnarray*}
\lefteqn{w^T\left(R\left(\sigma_2,z_2\right)-R\left(\sigma_1,z_1\right)\right)w}\\
  &{}={}&w^T\left(V_2-V_1\right)\\
%&{}={}&B_1\left(\left(v_1,V_1\right),\left(v_2,V_2\right)\right)
% - B_1\left(\left(v_1,V_1\right),\left(v_1,V_1\right)\right)\\
%
&{}={}&2 B_1\left(\left(v_1,V_1\right),\left(v_2,V_2\right)\right)
- B_2\left(\left(v_2,V_2\right),\left(v_2,V_2\right)\right)\\
 &&{-}\: B_1\left(\left(v_1,V_1\right),\left(v_1,V_1\right)\right)\\
%
% &{}={}& - B_1\left(\left(v_1,V_1\right),\left(v_1,V_1\right)\right) 
% + 2 B_1\left(\left(v_1,V_1\right),\left(v_2,V_2\right)\right)\\
 %&&{-} B_1\left(\left(v_2,V_2\right),\left(v_2,V_2\right)\right)\\
%&&{+} B_1\left(\left(v_2,V_2\right),\left(v_2,V_2\right)\right)
%- B_2\left(\left(v_2,V_2\right),\left(v_2,V_2\right)\right)\\
%
%
  &{}={}& -\int_\Omega {\sigma_1}\left|\nabla\left(v_1-v_2\right)\right|^2\mathrm{d}x\\
  &&{-}\:\sum_{l=1}^L\frac{1}{z_1^{[l]}}\int\limits_{E_l}\left(\left(v_1-V^{[l]}_1\right)-\left(v_2-V^{[l]}_2\right)\right)^2\mathrm{d}s\\
  &&{+}\:\int_\Omega \left(\sigma_1-\sigma_2\right)\left|\nabla v_2\right|^2\mathrm{d}x\\
  &&{+}\:\sum_{l=1}^L\left(\frac{1}{z_1^{[l]}}-\frac{1}{z_2^{[l]}}\right)\int_{E_l}\left(v_2-V_2^{[l]}\right)^2\,\mathrm{d}s.
\end{eqnarray*}
\setlength{\arraycolsep}{5pt}
Since the first two summands are non-negative, the first inequality of the theorem follows.
 
Interchanging the pairs $(\sigma_1,z_1)$ and $(\sigma_2, z_2)$ and applying
\setlength{\arraycolsep}{0.0em}
\begin{eqnarray*}
 \lefteqn{\sigma_2\left|\nabla\left(v_2-v_1\right)\right|^2+\left(\sigma_1-\sigma_2\right)\left|\nabla v_1\right|^2}\\
 &{}={}&\sigma_1\left|\nabla v_1-\frac{\sigma_2}{\sigma_1}\nabla v_2\right|^2+\frac{\sigma_2}{\sigma_1}\left(\sigma_1-\sigma_2\right)\left|\nabla v_2\right|^2 
\end{eqnarray*}
\setlength{\arraycolsep}{5pt}
and
\setlength{\arraycolsep}{0.0em}
\begin{eqnarray*}
\lefteqn{\frac{1}{z_2^{[l]}}\left(\left(v_2-V_2^{[l]}\right)-\left(v_1-V_1^{[l]}\right)\right)^2}\\
\lefteqn{{+}\:\left(\frac{1}{z_1^{[l]}}-\frac{1}{z_2^{[l]}}\right)\left(v_1-V_1^{[l]}\right)^2}\\
 &{}={}&\frac{1}{z_1^{[l]}}\left(\left(v_1-V_1^{[l]}\right)-\frac{z_1^{[l]}}{z_2^{[l]}}\left(v_2-V_2^{[l]}\right)\right)^2\\
 &&{+}\:\frac{z_1^{[l]}}{z_2^{[l]}}\left(\frac{1}{z_1^{[l]}}-\frac{1}{z_2^{[l]}}\right)\left(v_2-V_2^{[l]}\right)^2
\end{eqnarray*}
\setlength{\arraycolsep}{5pt}
 yields
\setlength{\arraycolsep}{0.0em}
\begin{eqnarray*}
\lefteqn{w^T\left(R\left(\sigma_2,z_2\right)-R\left(\sigma_1,z_1\right)\right)w}\\
&{}={}&\int_{\Omega}\sigma_1\left|\nabla v_1-\frac{\sigma_2}{\sigma_1}\nabla v_2\right|^2\,\mathrm{d}x 
+
\int_{\Omega}\frac{\sigma_2}{\sigma_1}\left(\sigma_1-\sigma_2\right)\left|\nabla v_2\right|^2\,\mathrm{d}x\\
&&{+}\:\sum_{l=1}^L\int\limits_{E_l}\frac{1}{z_1^{[l]}}\left(\left(v_1-V_1^{[l]}\right)-\frac{z_1^{[l]}}{z_2^{[l]}}\left(v_2-V_2^{[l]}\right)\right)^2\mathrm{d}s\\
&&{+}\:\sum_{l=1}^L\int_{E_l}\frac{z_1^{[l]}}{z_2^{[l]}}\left(\frac{1}{z_1^{[l]}}-\frac{1}{z_2^{[l]}}\right)\left(v_2-V_2^{[l]}\right)^2\,\mathrm{d}s.
\end{eqnarray*}
\setlength{\arraycolsep}{5pt}
Since the last two summands are non-negative, the second inequality of the theorem is proven.
\end{proof}

\section{Resolution guarantees}\label{sec:RGs}

In this section we introduce the concept of rigorous resolution guarantees
and show how to verify such guarantees by a simple test. We consider the setting described in 
section \ref{sec:CEM}.

\begin{definition}
An inclusion detection method that yields a reconstruction $D_R$ to the true inclusion $D$
is said to fulfill a \textbf{resolution guarantee} with respect to a partition $(\omega_{{\color{myMarkingColor}s}})_{{{\color{myMarkingColor}s}}=1}^N$ 
if
% the following holds:
% 
% If an element $\omega_{{\color{myMarkingColor}s}}$ of the resolution partition is completely covered by an inclusion $D$, the method
% will mark this element as a part of the reconstruction $D_R$.
% Furthermore, if there is no inclusion, no element of the resolution partition will be marked.
% 
% Mathematically speaking, this means:
\begin{itemize}
 \item[(i)] $\omega_{{\color{myMarkingColor}s}}\subseteq D$ implies $\omega_{{\color{myMarkingColor}s}}\subseteq D_R$ for ${{\color{myMarkingColor}s}}\in\lbrace 1,2,\ldots,N\rbrace$ {\color{myMarkingColor}(i.e.,
every element that is covered by the inclusion will correctly be marked in the reconstruction)},
and
 \item[(ii)] $D=\emptyset$ implies $D_R=\emptyset$ {\color{myMarkingColor}(i.e., 
if there is no inclusion then no element will be marked in the reconstruction)}.
\end{itemize}
\end{definition}

\begin{figure}[!ht]
\center
\begin{tikzpicture}[scale=2]

\newcommand{\myL}{36};
\newcommand{\myLdiv}{9};
\newcommand{\myp}{0.5};

\newcommand{\myb}{((8*\myp)/\myL)};
\newcommand{\myh}{((2*(1-\myp))/(\myL/4+1))};

\begin{scope}

\fill[color=black!60!white][scale=1/11.5, shift={(-0.5,5.5)}]
(1.9,0)
.. controls (1.9,-1.5) and (2,-2) .. (3,-2)
.. controls (4,-2) and (4,-4) .. (6,-4)
.. controls (7,-4) and (7.25,-3.875) .. (7.5,-3.75)
.. controls (8,-3.5) and (9.7,-2) .. (9.7,0)
.. controls (9.7,2) and (10,3) .. (8,3)
.. controls (7.25,3) and (6.75,2.9) .. (6.5,2.65)
.. controls (6.25,2.4) and (6,2) .. (5,2)
.. controls (4,2) and (1.9,1.5) .. (1.9,0);
\end{scope}

\begin{scope}
%\fill[color=black!60!white][scale=1/15, shift={(-13,-2.5)}, rotate=60]
\fill[color=\myMarkingColor][scale=1/15, shift={(-13,-2.5)}, rotate=60]
(1.9,0)
.. controls (1.9,-1.5) and (2,-2) .. (3,-2)
.. controls (4,-2) and (4,-4) .. (6,-4)
.. controls (7,-4) and (7.25,-3.875) .. (7.5,-3.75)
.. controls (8,-3.5) and (9.7,-2) .. (9.7,0)
.. controls (9.7,2) and (10,3) .. (8,3)
.. controls (7.25,3) and (6.75,2.9) .. (6.5,2.65)
.. controls (6.25,2.4) and (1.9,1.5) .. (1.9,0)
;
\end{scope}

\foreach \x in {1,2,...,\myLdiv}{
\draw (1,{-1+\myh+(\x-1)*(\myb+\myh)}) rectangle (1.03,{-1+\myh+(\x-1)*(\myb+\myh)+\myb});
\fill (1,{-1+\myh+(\x-1)*(\myb+\myh)}) rectangle (1.03,{-1+\myh+(\x-1)*(\myb+\myh)+\myb});}
%\draw [rotate=-90] (1.025+0.05,{-1+\myh+(9-1)*(\myb+\myh)+\myb/2}) node [anchor=north] {\footnotesize $E_L$};

\foreach \x in {1,...,6}{
\draw              (1.03,{-1+\myh+(\x-1)*(\myb+\myh)+\myb/2}) node [anchor=west] {\footnotesize $E_\x$};}
\foreach \x in {7.25}{
\draw              (1.075,{-1+\myh+(\x-1)*(\myb+\myh)+\myb/2}) node [anchor=west] {\footnotesize $\vdots$};}

\foreach \x in {1,2,...,\myLdiv}{
\draw [rotate=90]  (1,{-1+\myh+(\x-1)*(\myb+\myh)}) rectangle (1.03,{-1+\myh+(\x-1)*(\myb+\myh)+\myb});
\fill [rotate=90]  (1,{-1+\myh+(\x-1)*(\myb+\myh)}) rectangle (1.03,{-1+\myh+(\x-1)*(\myb+\myh)+\myb});
}
\foreach \x in {1,2,...,\myLdiv}{
\draw [rotate=180] (1,{-1+\myh+(\x-1)*(\myb+\myh)}) rectangle (1.03,{-1+\myh+(\x-1)*(\myb+\myh)+\myb});
\fill [rotate=180] (1,{-1+\myh+(\x-1)*(\myb+\myh)}) rectangle (1.03,{-1+\myh+(\x-1)*(\myb+\myh)+\myb});
}
\foreach \x in {1,2,...,\myLdiv}{
\draw [rotate=-90] (1,{-1+\myh+(\x-1)*(\myb+\myh)}) rectangle (1.03,{-1+\myh+(\x-1)*(\myb+\myh)+\myb});
\fill [rotate=-90] (1,{-1+\myh+(\x-1)*(\myb+\myh)}) rectangle (1.03,{-1+\myh+(\x-1)*(\myb+\myh)+\myb});
}

\draw (-1,-1) rectangle (1,1);
\draw (-0.8,-0.8) rectangle (0.8,0.8);
\draw (-0.6,-0.6) rectangle (0.6,0.6);
\draw (-0.3,-0.3) rectangle (0.3,0.3);

\foreach \x in {-1,-0.8,...,1}{
\draw (-1,\x) -- (-0.8,\x);
\draw (1,\x) -- (0.8,\x);
\draw (\x,-1) -- (\x,-0.8);
\draw (\x,1) -- (\x,0.8);}

\foreach \x in {1,2,...,5}{
\draw (-1.1+0.2*\x,0.9) node {\footnotesize $\omega_\x$};}
\draw (-1.09+0.2*6,0.9) node {\footnotesize $\cdots$};
\draw (0.45,0.45) node {$\omega_{{\color{\myMarkingColorDark}s}}$};

\foreach \x in {-0.8,-0.6,...,0.8}{
\draw (-0.8,\x) -- (-0.6,\x);
\draw (0.8,\x) -- (0.6,\x);
\draw (\x,-0.8) -- (\x,-0.6);
\draw (\x,0.8) -- (\x,0.6);}

\foreach \x in {-0.6,-0.3,...,0.6}{
\draw (-0.6,\x) -- (-0.3,\x);
\draw (0.6,\x) -- (0.3,\x);
\draw (\x,-0.6) -- (\x,-0.3);
\draw (\x,0.6) -- (\x,0.3);}

%\draw[color=black!60!white] (0.8,0.21) node {$D$};
\draw[color=\myMarkingColor] (-0.05,0.15) node {$D$};
\draw[color=\myMarkingColor][->] (0.05,0.15) -- (0.26,0.25);
\draw[color=\myMarkingColor][->] (-0.15,0.15) -- (-0.36,0.15);
\draw (-1,-0.975) node [anchor=north east] {$\Omega$};

\end{tikzpicture}
\caption{\label{fig:resolution_and_setting} Setting with a sample inclusion and resolution.}
\end{figure}
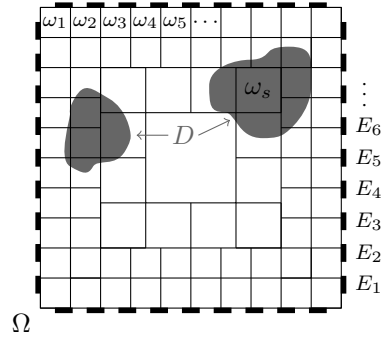

Hence, if a resolution guarantee holds true then no false positives are detected
in the anomaly-free case, and no false negatives are detected in the case of 
inclusions over a certain size. 

Obviously, a resolution guarantee will not hold true for arbitrarily fine partitions. 
The achievable resolution will depend on the number of electrodes, the inclusion's contrast,
the background error, contact impedances error, and the measurement noise, 
cf.\ section~\ref{sec:CEM}(a)-(e).

We will derive a simple test to verify whether a resolution guarantee holds true for a
given partition. To this end, we first consider the case of inclusions that are more conductive than the background. 
The analogous results for less conductive inclusions are summarized in section \ref{sec:less_conductive_inclusions}.
We use the following notations:
\setlength{\arraycolsep}{0.0em}
\begin{eqnarray*}
 {\sigma_B}_\mathrm{min}&{}:={}&\sigma_0-\epsilon,\\
 {\sigma_B}_\mathrm{max}&{}:={}&\sigma_0+\epsilon,\\
 {\sigma_D}_\mathrm{min}&{}:={}&\sigma_0+c,\\
 %{\sigma_D}_\mathrm{max}&{}:={}&\sigma_0-c<{\sigma_B}_\mathrm{min}\ \,\,\text{if}\ \,\sigma_0-\sigma_D(x)\geq c>\epsilon,\\
          z_\mathrm{min}&{}:={}&z_0-\gamma(1,1,\ldots,1),\\
          z_\mathrm{max}&{}:={}&z_0+\gamma(1,1,\ldots,1).
\end{eqnarray*}
\setlength{\arraycolsep}{5pt}

\subsection{Verification of resolution guarantees}\label{subsec:A_monotonicity_based_verification}

To verify whether a resolution guarantee holds in a given setting, we will apply 
the following monotonicity-based inclusion detection method. {\color{myMarkingColor}In the following,
we denote the set of eigenvalues of a symmetric square matrix $A$ by $\mathrm{eig}(A)$
and we write $A_1\geq A_2$ (or $A_2\leq A_1$) if the difference $A_1-A_2$ of two symmetric square matrices 
is positive definite, i.e. if $A_1-A_2$ possesses only non-negative eigenvalues.}

\begin{algorithm}\label{algo:nonlinearised_reconstruction_algorithm}
Mark each resolution element $\omega_{{\color{myMarkingColor}s}}$ for which
\begin{equation}\label{eq:test_criterion_algorithm_1_nonlinearised}
 R(\tau_{{\color{myMarkingColor}s}},z_\textrm{max})+\delta\mathrm{Id}\geq R_\delta,
\end{equation}
where
\begin{equation}\label{eq:def_taus}
\tau_{{\color{myMarkingColor}s}}:={\sigma_B}_\mathrm{min}\chi_{\Omega\setminus{\omega_{{\color{myMarkingColor}s}}}}+{\sigma_D}_\mathrm{min}\chi_{\omega_{{\color{myMarkingColor}s}}},\quad {{\color{myMarkingColor}s}}\in\lbrace 1,2,\ldots, N\rbrace.
\end{equation}

Then the reconstruction $D_R$ is given by the union of the marked resolution elements.
\end{algorithm}

\begin{theorem}\label{th:theorem_to_algorithm_nonlinearised}
 The reconstruction of algorithm \ref{algo:nonlinearised_reconstruction_algorithm} fulfils 
the resolution guarantee if 
 \begin{equation}\label{eq:verification_criterion_nonlinearised}
  \mu<-2\delta\leq 0
 \end{equation}
 with
 \begin{equation}\label{eq:worst_case_test}
  \mu:=\max_{{{\color{myMarkingColor}s}}=1}^N\left(\min\left(\mathrm{eig}\left(R(\tau_{{\color{myMarkingColor}s}},z_\mathrm{max})-R({\sigma_B}_\mathrm{max},z_\mathrm{min}))\right)\right)\right).
 \end{equation}
\end{theorem}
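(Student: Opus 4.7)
The plan is to verify conditions (i) and (ii) of the resolution guarantee separately, relying on the monotonicity relation of Theorem~\ref{th:monotonicity_relation} combined with the noise bound $\|R(\sigma,z)-R_\delta\|\leq\delta$, which in matrix-ordering terms reads $R(\sigma,z)-\delta\mathrm{Id}\leq R_\delta\leq R(\sigma,z)+\delta\mathrm{Id}$ since both matrices are symmetric.

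For (i), I would suppose $\omega_{{\color{myMarkingColor}s}}\subseteq D$ and first check that $\tau_{{\color{myMarkingColor}s}}\leq\sigma$ pointwise: on $\omega_{{\color{myMarkingColor}s}}\subseteq D$ one has $\sigma=\sigma_D\geq\sigma_0+c={\sigma_D}_\mathrm{min}$, while on $\Omega\setminus\omega_{{\color{myMarkingColor}s}}$ either $\sigma=\sigma_B\geq\sigma_0-\epsilon={\sigma_B}_\mathrm{min}$ or $\sigma=\sigma_D\geq\sigma_0+c\geq{\sigma_B}_\mathrm{min}$. Together with $z\leq z_\mathrm{max}$ from the contact-impedance assumption, Theorem~\ref{th:monotonicity_relation} yields $R(\tau_{{\color{myMarkingColor}s}},z_\mathrm{max})\geq R(\sigma,z)$. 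Adding $\delta\mathrm{Id}$ and using $R_\delta\leq R(\sigma,z)+\delta\mathrm{Id}$ gives the test inequality \eqref{eq:test_criterion_algorithm_1_nonlinearised}, so $\omega_{{\color{myMarkingColor}s}}$ is marked. Notice that this direction does not require the hypothesis on $\mu$.

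For (ii), I would suppose $D=\emptyset$, so $\sigma=\sigma_B\leq{\sigma_B}_\mathrm{max}$ on all of $\Omega$, and $z\geq z_\mathrm{min}$. Theorem~\ref{th:monotonicity_relation} now yields $R(\sigma,z)\geq R({\sigma_B}_\mathrm{max},z_\mathrm{min})$, whence $R_\delta\geq R({\sigma_B}_\mathrm{max},z_\mathrm{min})-\delta\mathrm{Id}$. Combining both estimates,
\[
R(\tau_{{\color{myMarkingColor}s}},z_\mathrm{max})+\delta\mathrm{Id}-R_\delta\leq R(\tau_{{\color{myMarkingColor}s}},z_\mathrm{max})-R({\sigma_B}_\mathrm{max},z_\mathrm{min})+2\delta\mathrm{Id}.
\]
Since $A\leq B$ implies $\min\mathrm{eig}(A)\leq\min\mathrm{eig}(B)$ (via the Rayleigh quotient), the smallest eigenvalue of the left-hand side is bounded above by $\min\mathrm{eig}(R(\tau_{{\color{myMarkingColor}s}},z_\mathrm{max})-R({\sigma_B}_\mathrm{max},z_\mathrm{min}))+2\delta\leq\mu+2\delta<0$ by hypothesis \eqref{eq:verification_criterion_nonlinearised}. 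Hence the test \eqref{eq:test_criterion_algorithm_1_nonlinearised} fails for every ${{\color{myMarkingColor}s}}$, so no element is marked and $D_R=\emptyset$.

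The substantive step is part (ii): the worst-case spectral margin $\mu$ must absorb a $2\delta$ contribution coming simultaneously from the two-sided noise bound and from the symmetric treatment of the background and contact-impedance uncertainties through the extremal choices ${\sigma_B}_\mathrm{max}$ and $z_\mathrm{min}$. This is precisely what the definition of $\mu$ in \eqref{eq:worst_case_test} is designed to encode, and it is what turns the qualitative monotonicity of Theorem~\ref{th:monotonicity_relation} into a usable, quantitative resolution guarantee under realistic modeling errors.
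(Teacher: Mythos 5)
Your proof is correct and follows essentially the same route as the paper: part (i) via $\tau_{s}\leq\sigma$, $z\leq z_\mathrm{max}$ and Theorem~\ref{th:monotonicity_relation} plus the noise bound, and part (ii) via the chain $R(\tau_{s},z_\mathrm{max})+\delta\mathrm{Id}-R_\delta\leq R(\tau_{s},z_\mathrm{max})-R({\sigma_B}_\mathrm{max},z_\mathrm{min})+2\delta\mathrm{Id}$. The only differences are cosmetic: you phrase (ii) directly through eigenvalue monotonicity where the paper argues by contradiction, and you spell out the pointwise check $\tau_{s}\leq\sigma$ that the paper leaves implicit.
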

\begin{proof}
First, let $\omega_{{\color{myMarkingColor}s}}\subseteq D$. Then, $\tau_{{\color{myMarkingColor}s}}\leq \sigma$ and $z_\mathrm{max}\geq z$.
Theorem \ref{th:monotonicity_relation} implies that 
\[
R(\sigma,z)\leq R(\tau_{{\color{myMarkingColor}s}},z_\mathrm{max}).
\]
Hence, $ R(\tau_{{\color{myMarkingColor}s}},z_\textrm{max})+\delta\mathrm{Id}\geq R_\delta$, so that
$\omega_{{\color{myMarkingColor}s}}$ will be marked by algorithm \ref{algo:nonlinearised_reconstruction_algorithm}.
This shows that part (i) of the resolution guarantee is satisfied.

To show part (ii) of the resolution guarantee, 
assume that $D=\emptyset$ and $D_R\neq\emptyset$. Then there must be an 
index ${{\color{myMarkingColor}s}}\in\lbrace 1,2,\ldots,N\rbrace$ with 
$$R(\tau_{{\color{myMarkingColor}s}},z_\mathrm{max})+\delta\mathrm{Id}\geq R_\delta.$$
Using Theorem \ref{th:monotonicity_relation} we obtain 
 \setlength{\arraycolsep}{0.0em}
\begin{eqnarray*}
  -2\delta\mathrm{Id}&{}\leq{}& R(\tau_{{\color{myMarkingColor}s}},z_\mathrm{max})-\left(\delta\mathrm{Id}+R_\delta\right)\\
  &{}\leq{}& R(\tau_{{\color{myMarkingColor}s}},z_\mathrm{max})-R(\sigma,z)\\
  &{}\leq{}& R(\tau_{{\color{myMarkingColor}s}},z_\mathrm{max})-R({\sigma_B}_\mathrm{max},z_\mathrm{min}),
 \end{eqnarray*}
 \setlength{\arraycolsep}{5pt}
and thus $\mu\geq -2\delta$.
\end{proof}

Theorem \ref{th:theorem_to_algorithm_nonlinearised} gives a rigorous yet conceptually simple
criterion to check whether a given resolution guarantee is valid or not. Given a 
partition $(\omega_{{\color{myMarkingColor}s}})_{{{\color{myMarkingColor}s}}=1}^N$, and bounds on the background and contact impedance error,
we obtain $\mu$ from calculating 
$R(\tau_{{\color{myMarkingColor}s}},z_\mathrm{max})$ and $R({\sigma_B}_\mathrm{max},z_\mathrm{min})$
by solving the partial differential equations of the complete electrode model.
If this yields a negative value for $\mu$, then the resolution guarantee holds true
up to a measurement error of $\delta<-\mu/2$.

\subsection{Fast {\color{myMarkingColor}linearized} verification of resolution guarantees}\label{subsec:A_fast_linearised_monotonicity_based_verification}

Checking the criterion in Theorem \ref{th:theorem_to_algorithm_nonlinearised} for a partition
with $N$ elements, requires the solution of $N+1$ forward problems. 
A less accurate but considerably faster test can be obtained by 
replacing the monotonicity tests in algorithm \ref{algo:nonlinearised_reconstruction_algorithm}
\begin{equation*}
 R(\tau_{{\color{myMarkingColor}s}},z_\textrm{max})+\delta\mathrm{Id}\geq R_\delta,
\end{equation*}
with their {\color{myMarkingColor}linearized} approximations
\begin{equation}
 R({\sigma_B}_\mathrm{min},z_\mathrm{max})+\lambda R'({\sigma_B}_\mathrm{min},z_\mathrm{max})(\chi_{\omega_{{\color{myMarkingColor}s}}})+\delta\mathrm{Id}\geq R_\delta,
\end{equation}
where $\lambda\in \mathbb{R}$ is a suitably chosen contrast level {\color{myMarkingColor}(as defined in the algorithms \ref{algo:linearised_reconstruction_algorithm} and \ref{algo:linearised_reconstruction_algorithm_less_conductive_localized})}, 
\begin{equation}\label{eq:adjoint_representation_of_jacobian}
R'\left( {\sigma_B}_\mathrm{min},z_\mathrm{max} \right)(\chi_{\omega_{{\color{myMarkingColor}s}}})
= - \left(\int_{\omega_{{\color{myMarkingColor}s}}}\nabla u_i \cdot \nabla u_j\,\mathrm{d}x\right)_{i,j=1}^{L-1},
\end{equation}
and $u_i$ is the solution of the complete electrode model introduced in 
in section~\ref{sec:CEM} with interior conductivity $ {\sigma_B}_\mathrm{min}$ and
contact impedances $z_\mathrm{max}$. One can interpret $R'$ as the Fr\'echet-derivative of the 
measurements with respect to the interior conductivity distribution, cf., e.g., Lionheart \cite{lionheart2003eit}
or Lechleiter and Rieder \cite{Lec08}, but we will not require this in the following.

% % % {\color{myMarkingColor}In the literature
% % % \begin{equation}
% % %  J\left(\omega_s\right):=\left(J^{[i,j]}\left(\omega_s\right)\right)_{i,j=1}^{L-1}:=R'\left( {\sigma_B}_\mathrm{min},z_\mathrm{max}\right)(\chi_{\omega_s})
% % % \end{equation}
% % % is also called \emph{sensitivity} or \emph{Jacobi matrix}.
% % % If the current $I_i$ is induced and the voltage $U^{[i]}_j=R^{[i,j]}$ is measured (see fig.\ \ref{fig:resolution_numerics}),
% % % the infinitesimal voltage change with respect to the conductivity change in $\chi_{\omega_s}$-\,direction can be
% % % expressed with
% % % \begin{equation}
% % %  \partial_{\omega_s} U_j=J^{[i,j]}\left({\omega_s}\right).
% % % \end{equation}
% % % If $\left(G_l^s\right)_{l=1}^m$ are finite elements of a FEM solver (which implements the CEM) with $\bigcup_{l=1}^m G_l^s=\omega_s$,
% % % \begin{equation}
% % %  J\left({\omega_s}\right)=\sum_{k=1}^m J\left(G_l^s\right).
% % % \end{equation}
% % % }

{\color{myMarkingColor}
\begin{remark}
The matrix $R'\left({\sigma_B}_\mathrm{min},z_\mathrm{max}\right)(\chi_{\omega_s})$ can be expressed in terms of the sensitivity matrix $S$ that is frequently being used in FEM-based EIT solvers (cf., e.g., \cite{choi2013regularizing} for a recent work in the
context of inclusion detection).

Let $(q_r)_{r=1}^p$ be the elements of a FEM discretization of the considered domain $\Omega$.
The sensitivity matrix $S\in\mathbb{R}^{(L-1)^2\times p}$ is given by
\begin{equation}
 S=\begin{pmatrix}
    S_1\\
    \vdots\\
    S_{L-1}
   \end{pmatrix}, 
\end{equation}
with
\begin{equation}
 S_j=\left(S^{[i,r]}_j\right)=\left(- \int_{q_r}\nabla u_i \cdot \nabla u_j\,\mathrm{d}x\right)\in\mathbb{R}^{L-1\times p}.
\end{equation}
If each element $\omega_s$ in the resolution partition is a union of elements $q_r$ 
of the FEM-discretization, then the entries of $R'$ can be obtained from summing up the corresponding entries
of $S$,
\begin{equation}\label{eq:sensitivity_based_representation_of_R_prime}
 R'\left({\sigma_B}_\mathrm{min},z_\mathrm{max}\right)(\chi_{\omega_s})
=\left(\sum_{r:\ q_r\subseteq \omega_s} S^{[i,r]}_j\right)_{i,j=1}^{L-1}.
\end{equation}
\end{remark}
}

To choose the parameter $\lambda$ we require the additional knowledge of a global bound $\sigma_\mathrm{max}$ with
\begin{equation}
\sigma(x)\leq \sigma_\mathrm{max} \quad \forall x\in \Omega.
\end{equation}

\begin{algorithm}\label{algo:linearised_reconstruction_algorithm}
Mark each resolution element $\omega_{{\color{myMarkingColor}s}}$ for which
\begin{equation}\label{eq:test_criterion_algorithm_1_linearised}
T_{{\color{myMarkingColor}s}}+\delta\mathrm{Id}\geq R_\delta,
\end{equation}
where
\begin{equation}
T_{{\color{myMarkingColor}s}}:=R({\sigma_B}_\mathrm{min},z_\mathrm{max})+\lambda R'({\sigma_B}_\mathrm{min},z_\mathrm{max})(\chi_{\omega_{{\color{myMarkingColor}s}}}), 
\end{equation}
\begin{equation}
\lambda:=(c+\epsilon)\frac{{\sigma_B}_\mathrm{min}}{\sigma_\mathrm{max}},\quad {{\color{myMarkingColor}s}}\in\lbrace 1,2,\ldots,N\rbrace.
\end{equation}
Then the reconstruction $D_R$ is given by the union of the marked resolution elements.
\end{algorithm}

\begin{theorem}\label{th:theorem_to_algorithm_linearised}
The reconstruction of algorithm \ref{algo:linearised_reconstruction_algorithm} fulfils 
the resolution guarantee if 
 \begin{equation}\label{eq:verification_criterion_linearised}
  \mu<-2\delta\leq 0
 \end{equation}
 with
 \begin{equation}
  \mu:=\max_{{{\color{myMarkingColor}s}}=1}^N(\min(\mathrm{eig}(T_{{\color{myMarkingColor}s}}-R({\sigma_B}_\mathrm{max},z_\mathrm{min})))).
 \end{equation}
\end{theorem}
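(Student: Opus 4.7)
The plan is to mirror the structure of the proof of Theorem \ref{th:theorem_to_algorithm_nonlinearised}, but to replace the nonlinear monotonicity bound $R(\tau_s,z_\mathrm{max})\geq R(\sigma,z)$ by a linearized upper bound $T_s\geq R(\sigma,z)$ that holds whenever $\omega_s\subseteq D$. The key ingredient will be the sharper lower inequality of Theorem \ref{th:monotonicity_inequality}, which is precisely what motivates both the correction term $\lambda R'$ and the choice of the contrast level $\lambda$.

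For part (i) of the resolution guarantee, I would fix $\omega_s\subseteq D$ and apply Theorem \ref{th:monotonicity_inequality} with $\sigma_1=\sigma$, $\sigma_2={\sigma_B}_\mathrm{min}$, $z_1=z$, $z_2=z_\mathrm{max}$. With this choice $(v_2,V_2)$ is the potential/voltage pair associated with the linearization point $({\sigma_B}_\mathrm{min},z_\mathrm{max})$ used to define $R'$. The lower bound in Theorem \ref{th:monotonicity_inequality} then reads
\[
w^T\bigl(R({\sigma_B}_\mathrm{min},z_\mathrm{max})-R(\sigma,z)\bigr)w \geq \int_\Omega \tfrac{{\sigma_B}_\mathrm{min}}{\sigma}(\sigma-{\sigma_B}_\mathrm{min})|\nabla v_2|^2\,\mathrm{d}x + \text{(boundary term)},
\]
in which the boundary term is non-negative because $z\leq z_\mathrm{max}$. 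The integrand over $\Omega\setminus D$ is non-negative since $\sigma_B\geq{\sigma_B}_\mathrm{min}$, while on $D$ one has $\sigma-{\sigma_B}_\mathrm{min}\geq c+\epsilon$ and ${\sigma_B}_\mathrm{min}/\sigma\geq{\sigma_B}_\mathrm{min}/\sigma_\mathrm{max}$. Hence the right-hand side dominates $\lambda\int_D|\nabla v_2|^2\,\mathrm{d}x\geq \lambda\int_{\omega_s}|\nabla v_2|^2\,\mathrm{d}x$. By linearity of the CEM in the current pattern together with \eqref{eq:adjoint_representation_of_jacobian} one has $\int_{\omega_s}|\nabla v_2|^2\,\mathrm{d}x=-w^T R'({\sigma_B}_\mathrm{min},z_\mathrm{max})(\chi_{\omega_s})w$, and rearranging yields $T_s\geq R(\sigma,z)$. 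Combined with $R_\delta\leq R(\sigma,z)+\delta\mathrm{Id}$, this gives the marking criterion \eqref{eq:test_criterion_algorithm_1_linearised} for $\omega_s$.

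For part (ii), the argument is parallel to the nonlinear case. Assuming $D=\emptyset$ and that some $\omega_s$ satisfies $T_s+\delta\mathrm{Id}\geq R_\delta$, Theorem \ref{th:monotonicity_relation} applied to $\sigma=\sigma_B\leq{\sigma_B}_\mathrm{max}$ and $z\geq z_\mathrm{min}$ gives $R(\sigma,z)\geq R({\sigma_B}_\mathrm{max},z_\mathrm{min})$, so that $R_\delta\geq R({\sigma_B}_\mathrm{max},z_\mathrm{min})-\delta\mathrm{Id}$. Chaining the two inequalities produces $T_s-R({\sigma_B}_\mathrm{max},z_\mathrm{min})\geq -2\delta\,\mathrm{Id}$, whence $\mu\geq -2\delta$, contradicting \eqref{eq:verification_criterion_linearised}.

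The main obstacle is the linearized upper-bound step in part (i): one has to verify that $T_s$ still dominates the true measurement matrix in spite of the unknown background error and the unknown inclusion values. This is exactly where the sharper lower inequality of Theorem \ref{th:monotonicity_inequality} is essential, and why $\lambda$ must be chosen as $(c+\epsilon){\sigma_B}_\mathrm{min}/\sigma_\mathrm{max}$: this choice absorbs the pointwise factor $\frac{{\sigma_B}_\mathrm{min}}{\sigma}(\sigma-{\sigma_B}_\mathrm{min})$ on $D$ uniformly over all admissible conductivities. Once $T_s\geq R(\sigma,z)$ is in hand, the rest of the argument consists of elementary matrix-definiteness manipulations.
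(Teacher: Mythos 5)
Your proof is correct and follows essentially the same route as the paper: the lower inequality of Theorem~\ref{th:monotonicity_inequality} plus the pointwise bound $\frac{{\sigma_B}_\mathrm{min}}{\sigma}(\sigma-{\sigma_B}_\mathrm{min})\geq\lambda\chi_{\omega_s}$ (valid since $\sigma\leq\sigma_\mathrm{max}$ and $\sigma-{\sigma_B}_\mathrm{min}\geq c+\epsilon$ on $D$) yields $T_s\geq R(\sigma,z)$, and part (ii) repeats the argument of Theorem~\ref{th:theorem_to_algorithm_nonlinearised}. The only cosmetic difference is that you take $z_1=z$, $z_2=z_\mathrm{max}$ in a single application of Theorem~\ref{th:monotonicity_inequality} and discard the non-negative electrode term, whereas the paper sets $z_1=z_2=z_\mathrm{max}$ there and obtains $R(\sigma,z_\mathrm{max})\geq R(\sigma,z)$ by a separate appeal to Theorem~\ref{th:monotonicity_relation}.
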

\begin{proof}
First, let $\omega_{{\color{myMarkingColor}s}}\subseteq D$. 
Given a vector $w\in \mathbb{R}^{L-1}$, 
let $u_w$ be the inner potential in a body with interior conductivity 
${\sigma_B}_\mathrm{min}$ and contact impedances $z_\mathrm{max}$ 
that results from driving a current of $w_j$ through the $j$-th electrode, respectively.

Theorem \ref{th:monotonicity_inequality} yields that
\setlength{\arraycolsep}{0.0em}
\begin{eqnarray*}
\lefteqn{w^T(R({\sigma_B}_\mathrm{min},z_\mathrm{max})-R(\sigma,z_\mathrm{max}))w}\\
 &{}\geq{}&\int_\Omega\frac{{\sigma_B}_\mathrm{min}}{\sigma}(\sigma-{\sigma_B}_\mathrm{min})|\nabla u_w|^2\mathrm{d}x,
\end{eqnarray*}
\setlength{\arraycolsep}{5pt}
and since $\omega_{{\color{myMarkingColor}s}}\subseteq D$ implies $\sigma-{\sigma_B}_\mathrm{min}\geq (c+\epsilon)\chi_{\omega_{{\color{myMarkingColor}s}}}$, it follows that
\[
R({\sigma_B}_\mathrm{min},z_\mathrm{max})-R(\sigma,z_\mathrm{max}) \geq
- \lambda R'({\sigma_B}_\mathrm{min},z_\mathrm{max})(\chi_{\omega_{{\color{myMarkingColor}s}}})
\] 
Hence, we obtain from theorem \ref{th:monotonicity_relation} that
\setlength{\arraycolsep}{0.0em}
\begin{eqnarray*}
\lefteqn{T_{{\color{myMarkingColor}s}}+\delta\mathrm{Id}}\\
 &{}={} & R({\sigma_B}_\mathrm{min},z_\mathrm{max})+\lambda R'({\sigma_B}_\mathrm{min},z_\mathrm{max})(\chi_{\omega_{{\color{myMarkingColor}s}}}) 
         +\delta\mathrm{Id}\\
 &{}\geq {} & R(\sigma,z_\mathrm{max}) +\delta\mathrm{Id}{}\geq {}  R(\sigma,z) +\delta\mathrm{Id}\\
 &{}\geq {} & R_\delta.
\end{eqnarray*}
\setlength{\arraycolsep}{5pt}
Hence, $\omega_{{\color{myMarkingColor}s}}$ will be marked, which shows that part (i) of the resolution guarantee is satisfied.

The proof of part (ii) of the resolution guarantee 
is completely analogous to the proof of part (ii) in theorem \ref{th:theorem_to_algorithm_nonlinearised}.
%To show part (ii) of the resolution guarantee, 
%assume that $D=\emptyset$ and $D_R\neq\emptyset$. Then there must be an 
%index ${{\color{myMarkingColor}s}}\in\lbrace 1,2,\ldots,N\rbrace$ with 
%$$T_{{\color{myMarkingColor}s}}+\delta\mathrm{Id}\geq R_\delta.$$
%Using Theorem \ref{th:monotonicity_relation} we obtain 
% \setlength{\arraycolsep}{0.0em}
%\begin{eqnarray*}
  %-2\delta\mathrm{Id}&{}\leq{}& T_{{\color{myMarkingColor}s}}-\left(\delta\mathrm{Id}+R_\delta\right)\\
  %&{}\leq{}& T_{{\color{myMarkingColor}s}}-R(\sigma,z)\\
  %&{}\leq{}& T_{{\color{myMarkingColor}s}}-R({\sigma_B}_\mathrm{max},z_\mathrm{min}),
 %\end{eqnarray*}
 %\setlength{\arraycolsep}{5pt}
%and thus $\mu\geq -2\delta$.
\end{proof}

\subsection{Verification for less conductive inclusions}\label{sec:less_conductive_inclusions}

The theory and the results are almost the same in the case that we consider inclusions that are less conductive than the background.
In that case we set
\begin{equation}
 {\sigma_D}_\mathrm{max}:=\sigma_0-c<{\sigma_B}_\mathrm{min}
\end{equation}
and consider the following algorithm.

\begin{algorithm}\label{algo:nonlinearised_reconstruction_algorithm_less_conductive}
Mark each resolution element $\omega_{{\color{myMarkingColor}s}}$ for which
\begin{equation}\label{eq:test_criterion_algorithm_1_nonlinearised_less_conductive}
 R(\tau_{{\color{myMarkingColor}s}},z_\mathrm{min})-\delta\mathrm{Id}\leq R_\delta,
\end{equation}
where
$$\tau_{{\color{myMarkingColor}s}}:={\sigma_B}_\mathrm{max}\chi_{\Omega\setminus{\omega_{{\color{myMarkingColor}s}}}}+{\sigma_{D}}_\mathrm{max}\chi_{\omega_{{\color{myMarkingColor}s}}},\quad {{\color{myMarkingColor}s}}\in\lbrace 1,2,\ldots, N\rbrace.$$
Then the reconstruction $D_R$ is given by the union of the marked resolution elements.
\end{algorithm}

\begin{theorem}\label{th:theorem_to_algorithm_nonlinearised_less_conductive_localized}
 The reconstruction of algorithm \ref{algo:nonlinearised_reconstruction_algorithm_less_conductive} fulfils
 the Resolution guarantee if
\begin{equation}\label{eq:verification_criterion_nonlinearised_localized_extension_less_conductive}
  \mu>2\delta\geq 0
 \end{equation}
 with
 $$\mu:=\min\limits_{{{\color{myMarkingColor}s}}=1}^N\left(\max\left(\mathrm{eig}\left(R(\tau_{{\color{myMarkingColor}s}},z_\mathrm{min})- R({\sigma_{B}}_\mathrm{min},z_\mathrm{max})\right)\right)\right).$$
 \end{theorem}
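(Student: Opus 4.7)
The plan is to mirror the proof of Theorem \ref{th:theorem_to_algorithm_nonlinearised}, but with all inequality directions reversed, since the test \eqref{eq:test_criterion_algorithm_1_nonlinearised_less_conductive} now asks that the ``worst case'' conductivity $\tau_s$ (obtained by filling $\omega_s$ with the upper bound ${\sigma_D}_\mathrm{max}$ on the inclusion conductivity and the rest of $\Omega$ with the upper bound ${\sigma_B}_\mathrm{max}$ on the background) produces data that lies below $R_\delta$ up to noise. The two key observations needed to drive the monotonicity argument are: if $\omega_s\subseteq D$ then $\sigma\leq \tau_s$ pointwise (on $\omega_s$ because $\sigma_D\leq {\sigma_D}_\mathrm{max}$, and on $\Omega\setminus \omega_s$ because both $\sigma_D\leq {\sigma_D}_\mathrm{max}\leq {\sigma_B}_\mathrm{max}$ and $\sigma_B\leq {\sigma_B}_\mathrm{max}$), and if $D=\emptyset$ then $\sigma=\sigma_B\geq {\sigma_B}_\mathrm{min}$. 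Together with $z_\mathrm{min}\leq z\leq z_\mathrm{max}$, Theorem~\ref{th:monotonicity_relation} will then provide the required matrix inequalities.

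For part (i) of the resolution guarantee, I assume $\omega_s\subseteq D$ and combine $\sigma\leq \tau_s$ with $z\geq z_\mathrm{min}$. Applying Theorem~\ref{th:monotonicity_relation} yields $R(\tau_s,z_\mathrm{min})\leq R(\sigma,z)$. Since $\|R(\sigma,z)-R_\delta\|\leq\delta$ implies $R(\sigma,z)\leq R_\delta + \delta\mathrm{Id}$ in the sense of matrix definiteness, I obtain $R(\tau_s,z_\mathrm{min})-\delta\mathrm{Id}\leq R_\delta$, so $\omega_s$ is marked by Algorithm~\ref{algo:nonlinearised_reconstruction_algorithm_less_conductive}.

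For part (ii), I argue by contradiction: assume $D=\emptyset$ but $D_R\neq \emptyset$. Then some index $s$ satisfies $R(\tau_s,z_\mathrm{min})-\delta\mathrm{Id}\leq R_\delta$, which combined with $R_\delta\leq R(\sigma,z)+\delta\mathrm{Id}$ gives
\begin{equation*}
R(\tau_s,z_\mathrm{min})-R(\sigma,z)\leq 2\delta\mathrm{Id}.
\end{equation*}
Since $D=\emptyset$ means $\sigma\geq{\sigma_B}_\mathrm{min}$ and $z\leq z_\mathrm{max}$, Theorem~\ref{th:monotonicity_relation} gives $R(\sigma,z)\leq R({\sigma_B}_\mathrm{min},z_\mathrm{max})$, hence $R(\tau_s,z_\mathrm{min})-R({\sigma_B}_\mathrm{min},z_\mathrm{max})\leq 2\delta\mathrm{Id}$. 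The largest eigenvalue of this symmetric matrix is therefore at most $2\delta$, and since $\mu$ is the minimum over $s$ of such largest eigenvalues, we conclude $\mu\leq 2\delta$, contradicting hypothesis \eqref{eq:verification_criterion_nonlinearised_localized_extension_less_conductive}.

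No serious obstacle is anticipated, since the argument is a symmetric reflection of the proof of Theorem~\ref{th:theorem_to_algorithm_nonlinearised}. The only place demanding care is the bookkeeping of signs: the algorithm now compares $R(\tau_s,z_\mathrm{min})$ from \emph{below} to $R_\delta$, and one must consistently pair $\tau_s$ with $z_\mathrm{min}$ in the marking test but with $z_\mathrm{max}$ in the reference matrix $R({\sigma_B}_\mathrm{min},z_\mathrm{max})$ in \eqref{eq:verification_criterion_nonlinearised_localized_extension_less_conductive}, because the worst-case bounds on $z$ are opposite for the two inequalities that Theorem~\ref{th:monotonicity_relation} is applied to.
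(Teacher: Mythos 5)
Your proposal is correct and follows essentially the same route as the paper: part (ii) is verbatim the paper's chain $R(\tau_s,z_\mathrm{min})-2\delta\mathrm{Id}\leq R({\sigma_B}_\mathrm{min},z_\mathrm{max})$ yielding $\mu\leq 2\delta$, and part (i) is the "analogous to Theorem \ref{th:theorem_to_algorithm_nonlinearised}" argument that the paper leaves implicit, which you have correctly spelled out (including the needed observation that $\sigma\leq\tau_s$ holds on all of $\Omega$, not just on $\omega_s$). No gaps.
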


\begin{proof}
The proof of part (i) of the resolution guarantee is analogous to the proof of part (i) in theorem \ref{th:theorem_to_algorithm_nonlinearised}.
%
% First, let $\omega_{{\color{myMarkingColor}s}}\subseteq D$. Then, $\tau_{{\color{myMarkingColor}s}}\geq\sigma$ and $z_\mathrm{min}\leq z$. Theorem \ref{th:monotonicity_relation} implies that
% $$R(\sigma,z)\geq R(\tau_{{\color{myMarkingColor}s}},z_\mathrm{min}).$$
% Hence, $R(\tau_{{\color{myMarkingColor}s}},z_\mathrm{min})-\delta\leq R_\delta$, so that $\omega_{{\color{myMarkingColor}s}}$ will be  marked by
% algorithm \ref{algo:nonlinearised_reconstruction_algorithm_less_conductive}. This shows that part (i) of the resolution guarantee is satisfied.
 
To show part (ii) of the resolution guarantee, assume that $D=\emptyset$ and $D_R\neq\emptyset$. Then there must be an index
$i\in \lbrace 1,2,\ldots,N\rbrace$ with
$$R(\tau_{{\color{myMarkingColor}s}},z_\mathrm{min})-\delta\mathrm{Id}\leq R_\delta\leq R(\sigma,z)+\delta\mathrm{Id}.$$
Using theorem \ref{th:monotonicity_relation} we obtain
$$R(\tau_{{\color{myMarkingColor}s}},z_\mathrm{min})-2\delta\mathrm{Id}\leq R({\sigma_{B}}_\mathrm{min},z_\mathrm{max}),$$
and thus $\mu\leq 2\delta$.
\end{proof}

\begin{algorithm}\label{algo:linearised_reconstruction_algorithm_less_conductive_localized}
Mark each resolution element $\omega_{{\color{myMarkingColor}s}}$ for which
\begin{equation}\label{eq:test_criterion_algorithm_1_linearised_less_conductive}
 T_{{\color{myMarkingColor}s}}-\delta\mathrm{Id}\leq R_\delta,
\end{equation}
where
\begin{equation}
T_{{\color{myMarkingColor}s}}:=R({\sigma_B}_\mathrm{max},z_\mathrm{min})+R'({\sigma_B}_\mathrm{max},z_\mathrm{min})(\lambda\chi_{\omega_{{\color{myMarkingColor}s}}}), 
\end{equation}
\begin{equation}
 \lambda:=-(c+\epsilon),\quad {{\color{myMarkingColor}s}}\in\lbrace 1,2,\ldots, N\rbrace.
\end{equation}
Then the reconstruction $D_{R}$ is given by the union of the marked resolution elements.
\end{algorithm}

\begin{theorem}\label{th:theorem_to_algorithm_linearised_less_conductive_localized}
 The reconstruction of algorithm \ref{algo:linearised_reconstruction_algorithm_less_conductive_localized} fulfils the resolution guarantee if
 \begin{equation}\label{eq:verification_criterion_linearised_less_conductive_localized}
   {\color{myMarkingColor}\mu}>2\delta\geq 0
 \end{equation}
 with
 \begin{equation}
  {\color{myMarkingColor}\mu}:=\min\limits_{{{\color{myMarkingColor}s}}=1}^N\left(\max\left(\mathrm{eig}\left(T_{{\color{myMarkingColor}s}}- R({\sigma_{B}}_\mathrm{min},z_\mathrm{max})\right)\right)\right).
 \end{equation}
 \end{theorem}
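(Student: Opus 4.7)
The plan is to mirror the proof of Theorem~\ref{th:theorem_to_algorithm_linearised} but with the roles of the signs reversed: since the inclusion is now less conductive than the background and the linearization point $({\sigma_B}_\mathrm{max},z_\mathrm{min})$ dominates $\sigma$ (while being dominated by $z$), the \emph{upper} estimate of Theorem~\ref{th:monotonicity_inequality} is the relevant one rather than the lower one. This also explains why no global bound $\sigma_\mathrm{max}$ is needed and why $\lambda=-(c+\epsilon)$ is the natural contrast level here.

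For Part~(i), fix $w\in\mathbb{R}^{L-1}$, let $u_w$ be the interior CEM potential at $({\sigma_B}_\mathrm{max},z_\mathrm{min})$ driven by the current vector $w$, and apply Theorem~\ref{th:monotonicity_inequality} with $\sigma_1=\sigma$, $\sigma_2={\sigma_B}_\mathrm{max}$, $z_1=z_2=z_\mathrm{min}$. The boundary sum on the left vanishes and the upper estimate reduces to
\[
w^T\bigl(R({\sigma_B}_\mathrm{max},z_\mathrm{min})-R(\sigma,z_\mathrm{min})\bigr)w
\leq \int_\Omega(\sigma-{\sigma_B}_\mathrm{max})|\nabla u_w|^2\,\mathrm{d}x.
\]
Because $\sigma\leq{\sigma_B}_\mathrm{max}$ on $\Omega$ and $\sigma-{\sigma_B}_\mathrm{max}\leq\lambda<0$ on $\omega_s\subseteq D$, the right-hand side is bounded above by $\lambda\int_{\omega_s}|\nabla u_w|^2\mathrm{d}x=-\lambda\, w^T R'({\sigma_B}_\mathrm{max},z_\mathrm{min})(\chi_{\omega_s})w$ via~\eqref{eq:adjoint_representation_of_jacobian}. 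Rearranging yields $T_s\leq R(\sigma,z_\mathrm{min})$, and Theorem~\ref{th:monotonicity_relation} applied with $z_\mathrm{min}\leq z$ (and $\sigma=\sigma$) gives $R(\sigma,z_\mathrm{min})\leq R(\sigma,z)$. Therefore $T_s-\delta\mathrm{Id}\leq R(\sigma,z)-\delta\mathrm{Id}\leq R_\delta$, so $\omega_s$ is marked.

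Part~(ii) is handled exactly as in the proof of Theorem~\ref{th:theorem_to_algorithm_nonlinearised_less_conductive_localized}(ii): if $D=\emptyset$ and some $\omega_s$ is marked, then $T_s-\delta\mathrm{Id}\leq R_\delta\leq R(\sigma,z)+\delta\mathrm{Id}$. In the inclusion-free case $\sigma\geq{\sigma_B}_\mathrm{min}$ and $z\leq z_\mathrm{max}$, so Theorem~\ref{th:monotonicity_relation} gives $R(\sigma,z)\leq R({\sigma_B}_\mathrm{min},z_\mathrm{max})$. Subtracting yields $T_s-R({\sigma_B}_\mathrm{min},z_\mathrm{max})\leq 2\delta\mathrm{Id}$, whence $\max\mathrm{eig}(T_s-R({\sigma_B}_\mathrm{min},z_\mathrm{max}))\leq 2\delta$ for this particular $s$ and hence $\mu\leq 2\delta$, contradicting~\eqref{eq:verification_criterion_linearised_less_conductive_localized}.

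The main obstacle is sign bookkeeping: $\lambda<0$, the matrix $R'(\ldots)(\chi_{\omega_s})$ is negative semi-definite by~\eqref{eq:adjoint_representation_of_jacobian}, and one must commit a priori to the upper (rather than lower) form of Theorem~\ref{th:monotonicity_inequality} so that the pointwise bound $\sigma-{\sigma_B}_\mathrm{max}\leq\lambda\chi_{\omega_s}$ enters without any $\sigma_2/\sigma_1$ factor. Once these choices are aligned the argument is parallel to the more conductive case.
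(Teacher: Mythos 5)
Your proposal is correct and follows essentially the same route as the paper: the paper's own (very terse) proof also derives $w^T\bigl(R({\sigma_B}_\mathrm{max},z_\mathrm{min})-\delta\mathrm{Id}-R_\delta\bigr)w\le\lambda\int_D|\nabla u_w|^2\,\mathrm{d}x$ from the upper estimate of Theorem~\ref{th:monotonicity_inequality} together with Theorem~\ref{th:monotonicity_relation}, concludes $T_s-\delta\mathrm{Id}\le R_\delta$ for part~(i), and refers part~(ii) back to Theorem~\ref{th:theorem_to_algorithm_nonlinearised_less_conductive_localized}. Your sign bookkeeping (committing to the upper inequality so that no $\sigma_2/\sigma_1$ factor and hence no $\sigma_\mathrm{max}$ is needed, and the negative semi-definiteness of $R'(\cdot)(\chi_{\omega_s})$) is exactly the point the paper leaves implicit, and you handle it correctly.
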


\begin{proof}
First, let $\omega_{{\color{myMarkingColor}s}}\subseteq D$. Given a vector $w\in\mathbb{R}^{L-1}$, let $u_w$ be the inner potential in a body with interior conductivity
${\sigma_B}_\mathrm{max}$ and contact impedances $z_\mathrm{min}$ that results from driving a current of $w_j$ through the $j$-th electrode,
respectively. As in the proof of theorem \ref{th:theorem_to_algorithm_linearised} 
we obtain by applying theorem \ref{th:monotonicity_relation} and \ref{th:monotonicity_inequality}:
\begin{equation*}
 w^T\left(R({\sigma_B}_\mathrm{max},z_\mathrm{min})-\delta\mathrm{Id}-R_\delta\right)w
 \leq% w^T(R_{C^{[+]}}(\sigma_0+\epsilon)-R_C(\sigma))w\\
%  &\leq w^T(R_{C^{[+]}}(\sigma_0+\epsilon)-R_{C^{[+]}}(\sigma))w\\
%  &\leq \int_\Omega(\sigma-(\sigma_0+\epsilon))|\nabla u^I|^2\mathrm{d}x\\
% % &\geq\int_\Omega\frac{\sigma_0-\epsilon}{\sigma}(\sigma_B\chi_{\Omega\setminus D}+\sigma_D\chi_D-\sigma_0+\epsilon)|\nabla u^I|^2\mathrm{d}x\\
%  &=\int_{\Omega\setminus D}(\sigma_B-(\sigma_0+\epsilon))|\nabla u^I|^2\mathrm{d}x\\
%  &\ \ \ +\int_D(\sigma_D-(\sigma_0+\epsilon))|\nabla u^I|^2\mathrm{d}x\\
%  &\leq \int_D\sigma_D-(\sigma_0+\epsilon)|\nabla u^I|^2\,\mathrm{d}x\\
% &\leq \int_D\sigma_0-c-(\sigma_0+\epsilon)|\nabla u^I|^2\,\mathrm{d}x\\
%&=
\lambda \int_D|\nabla u_w|^2\,\mathrm{d}x.
\end{equation*}
This yields
$$T_{{\color{myMarkingColor}s}}-\delta\mathrm{Id}\leq R_\delta.$$
Hence, $\omega_{{\color{myMarkingColor}s}}$ will be marked, which shows that part (i) of the resolution guarantee is satisfied.

The proof of part (ii) of the resolution guarantee is completely analogue to the proof of part (ii) in
theorem \ref{th:theorem_to_algorithm_nonlinearised_less_conductive_localized}.
\end{proof}

\section{Numerical results}\label{sec:numerical_results}
The numerical results in this section are generated with MATLAB\textsuperscript{\textregistered} and the commercial FEM-software COMSOL\textsuperscript{\textregistered}.
%{\color{myMarkingColor}Our implementation of the CEM requires LiveLink$^{TM}$ for MATLAB to operate %COMSOL\textsuperscript{\textregistered} embedded in a MATLAB\textsuperscript{\textregistered} environment.}

{\captionsetup{labelfont={myMarkingColor,bf}}
\captionsetup{font={myMarkingColor}}
\color{myMarkingColor}

In all examples, we used the measurement setup explained in figure \ref{fig:resolution_numerics}.
Conductivities and contact impedances are given in Siemens/meter (S/$m$) and Ohmsquaremeter ($\Omega m^2$), respectively. The unit of length is meter ($m$).
Currents and voltages are measured in milliampere ($m$A) and millivolt ($m$V), respectively.

\subsection{Results for academic examples}\label{sec:numerical_phantoms_unit_data}
}

We consider two measurement setups (see fig. \ref{fig:numerical_results_1} and \ref{fig:numerical_results_2}). For both settings, we assume that
the background conductivity is approximately $\sigma_0=1$ and the
contact impedances are approximately $z_0=(1,1,\ldots,1)\in\mathbb{R}^L$. The inclusions conductivity contrast is assumed to be $c=10$.

{\color{myMarkingColor}
\begin{figure}[!ht]%
\centering
\begin{center}
\begin{tikzpicture}[scale=2]

\newcommand{\myL}{36};
\newcommand{\myLdiv}{9};
\newcommand{\myp}{0.5};

\newcommand{\myb}{((8*\myp)/\myL)};
\newcommand{\myh}{((2*(1-\myp))/(\myL/4+1))};

\foreach \x in {1,2,...,\myLdiv}{
\draw (1,{-1+\myh+(\x-1)*(\myb+\myh)}) rectangle (1.03,{-1+\myh+(\x-1)*(\myb+\myh)+\myb});
\fill (1,{-1+\myh+(\x-1)*(\myb+\myh)}) rectangle (1.03,{-1+\myh+(\x-1)*(\myb+\myh)+\myb});}
%\draw [rotate=-90] (1.025+0.05,{-1+\myh+(9-1)*(\myb+\myh)+\myb/2}) node [anchor=north] {\footnotesize $E_L$};

%\foreach \x in {1,...,6}{
%\draw              (1.03,{-1+\myh+(\x-1)*(\myb+\myh)+\myb/2}) node [anchor=west] {\footnotesize $E_\x$};}
%\foreach \x in {7.25}{
%\draw              (1.075,{-1+\myh+(\x-1)*(\myb+\myh)+\myb/2}) node [anchor=west] {\footnotesize $\vdots$};}

\foreach \x in {1,2,...,\myLdiv}{
\draw [rotate=90]  (1,{-1+\myh+(\x-1)*(\myb+\myh)}) rectangle (1.03,{-1+\myh+(\x-1)*(\myb+\myh)+\myb});
\fill [rotate=90]  (1,{-1+\myh+(\x-1)*(\myb+\myh)}) rectangle (1.03,{-1+\myh+(\x-1)*(\myb+\myh)+\myb});
}
\foreach \x in {1,2,...,\myLdiv}{
\draw [rotate=180] (1,{-1+\myh+(\x-1)*(\myb+\myh)}) rectangle (1.03,{-1+\myh+(\x-1)*(\myb+\myh)+\myb});
\fill [rotate=180] (1,{-1+\myh+(\x-1)*(\myb+\myh)}) rectangle (1.03,{-1+\myh+(\x-1)*(\myb+\myh)+\myb});
}
\foreach \x in {1,2,...,\myLdiv}{
\draw [rotate=-90] (1,{-1+\myh+(\x-1)*(\myb+\myh)}) rectangle (1.03,{-1+\myh+(\x-1)*(\myb+\myh)+\myb});
\fill [rotate=-90] (1,{-1+\myh+(\x-1)*(\myb+\myh)}) rectangle (1.03,{-1+\myh+(\x-1)*(\myb+\myh)+\myb});
}

\draw (-1,-1) rectangle (1,1);
\draw (-0.8,-0.8) rectangle (0.8,0.8);
\draw (-0.6,-0.6) rectangle (0.6,0.6);
\draw (-0.3,-0.3) rectangle (0.3,0.3);

\foreach \x in {-1,-0.8,...,1}{
\draw (-1,\x) -- (-0.8,\x);
\draw (1,\x) -- (0.8,\x);
\draw (\x,-1) -- (\x,-0.8);
\draw (\x,1) -- (\x,0.8);}

%\foreach \x in {1,2,...,5}{
%\draw (-1.1+0.2*\x,0.9) node {\footnotesize $\omega_\x$};}
%\draw (-1.09+0.2*6,0.9) node {\footnotesize $\cdots$};
%\draw (0.45,0.45) node {$\omega_i$};

\foreach \x in {-0.8,-0.6,...,0.8}{
\draw (-0.8,\x) -- (-0.6,\x);
\draw (0.8,\x) -- (0.6,\x);
\draw (\x,-0.8) -- (\x,-0.6);
\draw (\x,0.8) -- (\x,0.6);}

\foreach \x in {-0.6,-0.3,...,0.6}{
\draw (-0.6,\x) -- (-0.3,\x);
\draw (0.6,\x) -- (0.3,\x);
\draw (\x,-0.6) -- (\x,-0.3);
\draw (\x,0.6) -- (\x,0.3);}

%\draw (-1,-0.975) node [anchor=north east] {$\Omega$};

\end{tikzpicture}
\end{center}
\caption{$\Omega=[-1,1]^2$ and $36$ electrodes are covering $50\%$ of the boundary.
{\color{myMarkingColor}The first electrode $E_1$ is the lowermost one on the right boundary edge and the electrodes are numbered counter-clockwise.}}
\label{fig:numerical_results_1}%
\end{figure}
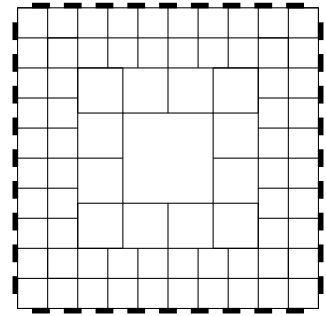

\begin{figure}[!ht]%
\centering
\begin{center}
\begin{tikzpicture}[scale=2]

\newcommand{\myL}{32};
\newcommand{\myLdiv}{8};
\newcommand{\myp}{0.5};

\newcommand{\myb}{((4*\myp)/\myL)};
\newcommand{\myh}{((1*(1-\myp))/(\myL/4+1))};

\foreach \x in {1,2,...,\myLdiv}{
\draw [rotate=-90] (1,{-0.5+\myh+(\x-1)*(\myb+\myh)}) rectangle (1.03,{-0.5+\myh+(\x-1)*(\myb+\myh)+\myb});
\fill [rotate=-90] (1,{-0.5+\myh+(\x-1)*(\myb+\myh)}) rectangle (1.03,{-0.5+\myh+(\x-1)*(\myb+\myh)+\myb});
}

\draw (-1,-1) rectangle (1,1);
\draw (-1,-0) rectangle (1,1);

\foreach \x in {-0.5,0,0.5}{
\draw (\x,-1) -- (\x,0);}
\draw (-1,-0.5) -- (1,-0.5);

\foreach \x in {-0.25,0,0.25}{
\draw (\x,-1) -- (\x,-0.5);}
\draw (-0.5,-0.75) -- (0.5,-0.75);

\foreach \x in {-0.375,-0.25,...,0.375}{
\draw (\x,-1) -- (\x,-0.75);}
\draw (-0.5,-0.875) -- (0.5,-0.875);

%\foreach \x in {-0.4375,-0.375,...,0.4375}{
%\draw (\x,-1) -- (\x,-0.875);}
%\draw (-0.5,-0.9375) -- (0.5,-0.9375);

%\draw (-1,-0.975) node [anchor=north east] {$\Omega$};

%\foreach \x in {2,3,4}{
%\draw ({-1+0.25+((\x-2)*0.5)},-0.25) node {\large$\omega_\x$};
%}
%\foreach \x in {5}{
%\draw ({-1+0.25+((\x-2)*0.5)},-0.25) node {\large$\cdots$};
%}

%\draw (0,0.5) node {\Large$\omega_1$};

\end{tikzpicture}
\end{center}\caption{$\Omega=[-1,1]^2$ and $8$ electrodes are covering $25\%$ of the lower boundary edge.
{\color{myMarkingColor}The electrodes are numbered from the left to the right.}}
\label{fig:numerical_results_2}%
\end{figure}
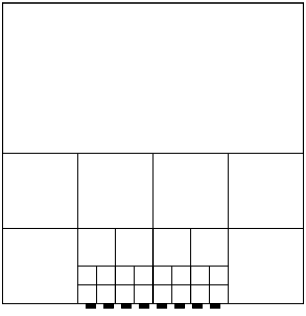
}

The results for figure \ref{fig:numerical_results_1} using our
non-linearised verification procedure in theorem \ref{th:theorem_to_algorithm_nonlinearised} are presented in table \ref{tb:table_1}.
Table \ref{tb:table_2} shows the results for figure \ref{fig:numerical_results_1} obtained from the {\color{myMarkingColor}linearized} verification procedure in theorem \ref{th:theorem_to_algorithm_linearised} under the additional assumption that ${\sigma_D}_\mathrm{max}=15$ is an upper bound on the inclusion contrast. 

\begin{table}[!ht]\caption{%
{\color{myMarkingColor}\bf RG} validation for figure \ref{fig:numerical_results_1} (non-lineari{\color{myMarkingColor}z}ed).}
\begin{center}
\begin{tabular}{|c|c|c|}\hline
background error $\epsilon$:&contact imped.\ error $\gamma$:& abs.\ meas.\ noise $\delta$:\\\hhline{|=|=|=|}
0\%&0\%&0.13\\\hline
%1\%&0\%&7.7\%\\\hline
0.25\%&0\%&0.11\\\hline
0\%&0.25\%&0.10\\\hline
0.25\%&0.25\%&0.088\\\hline
\end{tabular}
\end{center}\label{tb:table_1}
\end{table}

\begin{table}[!ht]\caption{
{\color{myMarkingColor}\bf RG} validation for figure \ref{fig:numerical_results_1} (lineari{\color{myMarkingColor}z}ed).}
\begin{center}
\begin{tabular}{|c|c|c|c|}\hline
background error $\epsilon$:&contact imped.\ error $\gamma$:& abs.\ meas.\ noise $\delta$:\\\hhline{|=|=|=|}
0\%&0\%&0.051\\\hline
0.25\%&0\%&0.035\\\hline
0\%&0.25\%&0.025\\\hline
0.25\%&0.25\%&0.013\\\hline
\end{tabular}
\end{center}\label{tb:table_2}
\end{table}

The desired resolution shown in the second measurement setup in figure \ref{fig:numerical_results_2} is particularly ambitious. Using the non-linearised verification method it is not possible to guarantee the shown resolution.
Under the additional assumption ${\sigma_D}_\mathrm{max}=12$ on the upper bound of the inclusion contrast, the resolution can be guaranteed using the {\color{myMarkingColor}linearized} validation method up to the errors given in table \ref{tb:table_3}.

\begin{table}[!ht]\caption{
{\color{myMarkingColor}\bf RG} validation for figure \ref{fig:numerical_results_2} (lineari{\color{myMarkingColor}z}ed).}
\begin{center}
\begin{tabular}{|c|c|c|c|}\hline
background error $\epsilon$:&contact imped.\ error $\gamma$:& abs.\ meas.\ noise $\delta$:\\\hhline{|=|=|=|}
0\%&0\%&0.026\\\hline
0.05\%&0\%&0.022\\\hline
0\%&0.05\%&0.0036\\\hline
0.05\%&0.05\%&0.0022\\\hline
\end{tabular}
\end{center}\label{tb:table_3}
\end{table}

%%%%%%%%%%%%%%%%%%%%%%%%%%%%%%%%%%%%%%%%%%%%%%%%%%%%%%%%%%%%%%%%%%%%%%%%%%%%%%%%%%%%%%%%%%%%%%%%%%%%%%%%%%%%%%%%%%%%%%%%%%%%%%%%%%%%%%%%%%%%%%%%%%%%%%%%%%%%%%%%%%%%%%%

{{\captionsetup{labelfont={myMarkingColor}}
\captionsetup{font={myMarkingColor}}
\color{myMarkingColor}
\subsection{Results using physiologically relevant parameters}\label{sec:numerical_phantoms_physiological_data}

The following setting is motivated by the idea of detecting hemorrhages inside fatty tissue. The resolution partition and
the electrodes are concentrated to the lower half of a circle-shaped object $\Omega$.
We used physiological parameter values based on the overview about electric properties of tissue \cite{miklavvcivc2006electric}.
% tissue $\epsilon$: conductivity range in S/$m$:\\\hhline{|=|=|}
% fat & $(0.02,0.04)$\\\hline
% blood & $(0.43,0.7)$\\\hline
% \end{table}\caption{}
%
% We adopt the conductivity range of blood $(0.43,0.7)$, the conductivity range of
% bone $(0.02,0.06)$ and we assumed that the conductivity range of fat $(0.02,0.04)$ can be narrowed down to a smaller range.
% For the inhomogeneity, this leads to a conductivity range of $(0.01,0.7)$.
%
%
We assume that the background conductivity is approximately $\sigma_0=0.03$.
The inclusion minimal conductivity contrast is $c=0.43-0.03=0.4$ and the upper bound of the inclusion conductivity is ${\sigma_D}_\mathrm{max}=0.7$. 

Since realistic values for contact impedances are typically much smaller than $1$ (cf. \cite{vilhunen2002simultaneous}),
we assume the contact impedance on each electrode to be approximately $0.01$. 
%$z\approx z_0=(0.01,\ldots,0.01)$.% ($\pm 5\%$).  

The results for figure \ref{fig:numerical_results_3} using our
non-linearized verification procedure in theorem \ref{th:theorem_to_algorithm_nonlinearised} are presented in table \ref{tb:table_4}.
Table \ref{tb:table_5} shows the results for figure \ref{fig:numerical_results_3} obtained from the linearized verification procedure in theorem \ref{th:theorem_to_algorithm_linearised}. 

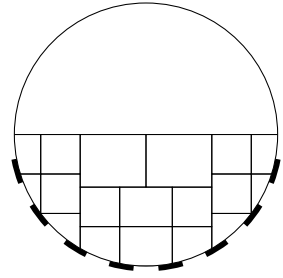
\begin{figure}[!ht]\color{myMarkingColor}%
\centering
\begin{center}
\begin{tikzpicture}[scale=35]

\begin{scope}
\foreach \k in {1,2,...,8}{
  \begin{scope}[rotate=-2*\k*180/17]
  \draw [line width=4pt] (0.05,0) arc (0:1*180/17:0.05);
  \end{scope}
}

\fill[white] (0,0) circle (0.05);

\begin{scope}
\clip (0,0) circle (0.05);

\newcommand{\radius}{0.05};

\newcommand{\lb}{0.3*\radius};

\newcommand{\LLx}{-1.1*\radius};
\newcommand{\LLy}{-3*\lb};
\foreach \j in {1,2}{
    \foreach \k in {1,2,3}{
       \ifnum \j>1
            \draw (\LLx+\j*\lb-\lb,\LLy+\k*\lb-\lb) rectangle (\LLx+\j*\lb,\LLy+\k*\lb);
       \else
	  \ifnum \k>1
	      \draw (\LLx+\j*\lb-\lb,\LLy+\k*\lb-\lb) rectangle (\LLx+\j*\lb,\LLy+\k*\lb);
	  \fi
       \fi
    }
}

\renewcommand{\LLx}{0.5*\radius};
\renewcommand{\LLy}{-3*\lb};
\foreach \j in {1,2}{
    \foreach \k in {1,2,3}{
       \ifnum \j<2
            \draw (\LLx+\j*\lb-\lb,\LLy+\k*\lb-\lb) rectangle (\LLx+\j*\lb,\LLy+\k*\lb);
       \else
	  \ifnum \k>1
	      \draw (\LLx+\j*\lb-\lb,\LLy+\k*\lb-\lb) rectangle (\LLx+\j*\lb,\LLy+\k*\lb);
	  \fi
       \fi
    }
}

\renewcommand{\LLx}{-0.5*\radius};
\renewcommand{\LLy}{-\radius};
\foreach \k in {1,2}{
    \draw (\LLx,\LLy+\k*\lb-\lb) rectangle (\LLx+\lb,\LLy+\k*\lb);
}

\renewcommand{\LLx}{0.2*\radius};
\foreach \k in {1,2}{
    \draw (\LLx,\LLy+\k*\lb-\lb) rectangle (\LLx+\lb,\LLy+\k*\lb);
}

\newcommand{\la}{0.4*\radius};
\renewcommand{\lb}{0.5*\radius};
\renewcommand{\LLx}{-0.5*\radius};
\renewcommand{\LLy}{-0.4*\radius};
\draw (\LLx,\LLy) rectangle (\LLx+\lb,\LLy+\la);

\renewcommand{\LLx}{0*\radius};
\draw (\LLx,\LLy) rectangle (\LLx+\lb,\LLy+\la);

\renewcommand{\lb}{0.3*\radius};
\renewcommand{\LLx}{-0.2*\radius};
\renewcommand{\LLy}{-0.7*\radius};
\draw (\LLx,\LLy) rectangle (\LLx+\la,\LLy+\lb);

\renewcommand{\LLy}{-1*\radius};
\draw (\LLx,\LLy) rectangle (\LLx+\la,\LLy+\lb);

\end{scope}

\draw (0,0) circle (0.05);

\end{scope}

\end{tikzpicture}
\end{center}\caption{$\Omega$ is a disk with diameter of $0.05$ and $8$ electrodes are covering $47\%$ of the lower half of the boundary.
The electrodes are numbered from the left to the right. The resolution partition covers the lower half of the disk.}
\label{fig:numerical_results_3}%
\end{figure}

\begin{table}[!ht]\color{myMarkingColor}%
\caption{{\bf RG} validation for figure \ref{fig:numerical_results_3} (non-linearized)}
\begin{center}
\begin{tabular}{|c|c|c|}\hline
background error $\epsilon$:&contact imped.\ error $\gamma$:& abs.\ meas.\ noise $\delta$:\\\hhline{|=|=|=|}
0\%&0\%&4.4\\\hline
5\%&0\%&0.7\\\hline
0\%&5\%&4.1\\\hline
5\%&5\%&0.6\\\hline
\end{tabular}
\end{center}\label{tb:table_4}
\end{table}

\begin{table}[!ht]\color{myMarkingColor}\caption{{\bf RG} validation for figure \ref{fig:numerical_results_3} (linearized)}
\begin{center}
\begin{tabular}{|c|c|c|}\hline
background error $\epsilon$:&contact imped.\ error $\gamma$:& abs.\ meas.\ noise $\delta$:\\\hhline{|=|=|=|}
0\%&0\%&1.8\\\hline
1\%&0\%&0.7\\\hline
0\%&1\%&1.8\\\hline
1\%&1\%&0.7\\\hline
\end{tabular}
\end{center}\label{tb:table_5}
\end{table}

%%%%%%%%%%%%%%%%%%%%%%%%%%%%%%%%%%%%%%%%%%%%%%%%%%%%%%%%%%%%%%%%%%%%%%%%%%%%%%%%%%%%%%%%%%%%%%%%%%%%%%%%%%%%%%%%%%%%%%%%%%%%%%%%%%%%%%%%%

\subsection{Reconstruction guarantees in a region of interest}\label{sec:blending_of_inhomogeneities}

Our results can be extended to the case where certain areas should be excluded from the region of interest, e.g., if
their background range is known to be violated. As an example, we will add to the setting in section \ref{sec:numerical_phantoms_physiological_data} an area $\omega_I$ consisting of bone and blood beside fat with a conductivity range of $(0.01,0.7)$, cf.\ \cite{miklavvcivc2006electric}.

\vspace*{1em} 

The theory in \ref{subsec:A_monotonicity_based_verification} can be extended as follows: Let
$$\sigma(x)\in\left({\sigma_I}_\mathrm{min},{\sigma_I}_\mathrm{max}\right)\quad\forall x\in\omega_I$$
be the bounds for the conductivity in the area that is to be excluded from the region of interest.
We apply algorithm \ref{algo:nonlinearised_reconstruction_algorithm} with the following changes:
$\tau_s$ in \eqref{eq:def_taus} is replaced by
\begin{equation}
 \tau_{{\color{myMarkingColor}s}}:={\sigma_B}_\mathrm{min}\chi_{\Omega\setminus({\omega_{{\color{myMarkingColor}s}}}\cup\omega_I)}+{\sigma_D}_\mathrm{min}\chi_{\omega_{{\color{myMarkingColor}s}}}+{\sigma_I}_\mathrm{min}\chi_{\omega_I}
\end{equation}
and ${\sigma_B}_\mathrm{max}$ in \eqref{eq:worst_case_test} is replaced by
\begin{equation}
 {\sigma_B}_\mathrm{max}\chi_{\Omega\setminus\omega_I}+{\sigma_I}_\mathrm{max}\chi_{\omega_I}.
\end{equation}
Then, analogously to the result in theorem \ref{th:theorem_to_algorithm_nonlinearised}, we obtain a reconstruction
guarantee where every element covered by the inclusion will be correctly marked, and no element will be marked if
there is no anomaly outside of $\omega_I$.

We tested this variant on the setting shown in figure \ref{fig:numerical_results_4} where $\omega_I$ is assumed to 
consist of bone and blood beside fat with a conductivity range of $(0.01,0.7)$. The results are presented in table \ref{tb:table_6}.

\begin{figure}[!ht]\color{myMarkingColor}%
\centering
\begin{center}
\begin{tikzpicture}[scale=35]

\begin{scope}
\foreach \k in {1,2,...,8}{
  \begin{scope}[rotate=-2*\k*180/17]
  \draw [line width=4pt] (0.05,0) arc (0:1*180/17:0.05);
  \end{scope}
}

\fill[white] (0,0) circle (0.05);

\begin{scope}
\clip (0,0) circle (0.05);

\newcommand{\radius}{0.05};

\newcommand{\lb}{0.3*\radius};

\newcommand{\LLx}{-1.1*\radius};
\newcommand{\LLy}{-3*\lb};
\foreach \j in {1,2}{
    \foreach \k in {1,2,3}{
       \ifnum \j>1
            \draw (\LLx+\j*\lb-\lb,\LLy+\k*\lb-\lb) rectangle (\LLx+\j*\lb,\LLy+\k*\lb);
       \else
	  \ifnum \k>1
	      \draw (\LLx+\j*\lb-\lb,\LLy+\k*\lb-\lb) rectangle (\LLx+\j*\lb,\LLy+\k*\lb);
	  \fi
       \fi
    }
}

\renewcommand{\LLx}{0.5*\radius};
\renewcommand{\LLy}{-3*\lb};
\foreach \j in {1,2}{
    \foreach \k in {1,2,3}{
       \ifnum \j<2
            \draw (\LLx+\j*\lb-\lb,\LLy+\k*\lb-\lb) rectangle (\LLx+\j*\lb,\LLy+\k*\lb);
       \else
	  \ifnum \k>1
	      \draw (\LLx+\j*\lb-\lb,\LLy+\k*\lb-\lb) rectangle (\LLx+\j*\lb,\LLy+\k*\lb);
	  \fi
       \fi
    }
}

\renewcommand{\LLx}{-0.5*\radius};
\renewcommand{\LLy}{-\radius};
\foreach \k in {1,2}{
    \draw (\LLx,\LLy+\k*\lb-\lb) rectangle (\LLx+\lb,\LLy+\k*\lb);
}

\renewcommand{\LLx}{0.2*\radius};
\foreach \k in {1,2}{
    \draw (\LLx,\LLy+\k*\lb-\lb) rectangle (\LLx+\lb,\LLy+\k*\lb);
}

\newcommand{\la}{0.4*\radius};
\renewcommand{\lb}{0.5*\radius};
\renewcommand{\LLx}{-0.5*\radius};
\renewcommand{\LLy}{-0.4*\radius};
\draw (\LLx,\LLy) rectangle (\LLx+\lb,\LLy+\la);

\renewcommand{\LLx}{0*\radius};
\draw (\LLx,\LLy) rectangle (\LLx+\lb,\LLy+\la);

\renewcommand{\lb}{0.3*\radius};
\renewcommand{\LLx}{-0.2*\radius};
\renewcommand{\LLy}{-0.7*\radius};
\draw (\LLx,\LLy) rectangle (\LLx+\la,\LLy+\lb);

\renewcommand{\LLy}{-1*\radius};
\draw (\LLx,\LLy) rectangle (\LLx+\la,\LLy+\lb);

\draw (-0.01,0.02) rectangle (0.01,0.04);
\draw (0,0.03) node {$\omega_I$};

\end{scope}

\draw (0,0) circle (0.05);

\end{scope}

\end{tikzpicture}
\end{center}\caption{$\Omega$ is a disk with diameter of $0.05$ and $8$ electrodes are covering $47\%$ of the lower half of the boundary.
The electrodes are numbered from the left to the right. The resolution partition covers the lower half of the disk. The area $\omega_I$
allows the presence of bone and blood beside fat.}
\label{fig:numerical_results_4}%
\end{figure}
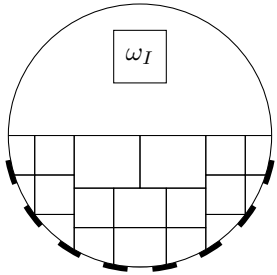

\begin{table}[!ht]\color{myMarkingColor}\caption{{\bf RG} (extended version) validation for figure \ref{fig:numerical_results_4}}
\begin{center}
\begin{tabular}{|c|c|c|}\hline
background error $\epsilon$:&contact imped.\ error $\gamma$:& abs.\ meas.\ noise $\delta$:\\\hhline{|=|=|=|}
0\%&0\%&2.6\\\hline
5\%&0\%&0.3\\\hline
0\%&5\%&2.4\\\hline
5\%&5\%&0.2\\\hline
\end{tabular}
\end{center}\label{tb:table_6}
\end{table}
}}

%%%%%%%%%%%%%%%%%%%%%%%%%%%%%%%%%%%%%%%%%%%%%%%%%%%%%%%%%%%%%%%%%%%%%%%%%%%%%%%%%%%%%%%%%%%%%%%%%%%%%%%%%%%%%%%%%%%%%%%%%%%%%%%%%%%%%%%%%%%%%%%%%%%%%%%%%%%%%%%%%%%%%%%

\section{Conclusion and discussion}\label{sec:conclusion}
We have introduced a rigorous concept of resolution for anomaly detection within realistically {\color{myMarkingColor}modeled} EIT settings. By applying
monotonicity arguments we showed that it is possible to rigorously guarantee
a certain resolution even for settings that include both, systematic {\color{myMarkingColor}modeling} (background and contact impedance) errors and general (e.g., measurement) errors.

We have derived a constructive method to evaluate the amount of errors up to which a given
desired resolution can be guaranteed. We have also derived a linearized variant of our method that allows fast validation of resolution guarantees (while still yielding rigorous results).
In that context let us stress that somewhat surprisingly the linearized variant does not seem to be always inferior to the non-linearized variant as the last example in section \ref{sec:numerical_results} shows.

{\color{myMarkingColor}
Our results may be used to determine whether a desired resolution can be achieved and to quantify
the required measurement accuracy. Moreover, our results could be the basis of optimization strategies regarding the
resolution, or the number and positions of electrodes and the driving patterns.
}

It would be interesting to extend our approach to explicitly address other systematic errors, e.g. including the imaging domain shape and the electrodes position.

{\captionsetup{labelfont={myMarkingColor}}
\captionsetup{font={myMarkingColor}}
\color{myMarkingColor}
\section{Acknowledgements}
The authors would like to thank the German Research Foundation (DFG)
for financial support of the project within the Cluster of Excellence in Simulation
Technology (EXC 310/1) at the University of Stuttgart.
}
\bibliographystyle{IEEEtran}
\bibliography{IEEEabrv,literaturliste}

\end{document}